\title{How to approximate irrational numbers nicely?}
\author{Tirthankar Bhattacharyya\thanks{Department of Mathematics, Indian Institute of Science, Bengaluru, Karnataka, India}\and Soham Bakshi\thanks{Undergraduate Programme, Indian Statistical Institute, Bengaluru, Karnataka, India}\and Arka Das\thanks{Undergraduate Programme, Indian Institute of Science, Bengaluru, Karnataka, India}\and {}}
\date{15 February, 2021}
\theoremstyle{plain}
\newtheorem{thm}{Theorem}[section]
\newtheorem{lem}[thm]{Lemma}
\newtheorem{prop}[thm]{Proposition}
\newtheorem{cor}[thm]{Corollary}
\newtheorem{defn}[thm]{Definition}
\newtheorem{exmp}[thm]{Example}
\newtheorem{note}[thm]{Note}
\begin{document}

\begin{titlepage}
\maketitle
\let\thefootnote\relax\footnotetext{2020 Mathematics Subject Classification: 11J72, 11-02, 41-02}
\end{titlepage}

\newpage

\begin{abstract}

In the literature, we have various ways of proving irrationality of a
real number. In this survey article, we shall emphasize on a particular criterion to  
prove irrationality. This is called nice approximation of a number by a sequence of rational 
numbers. This criterion of irrationality is easy to prove and is of great importance. Using it, irrationality of a large class of numbers is proved.  We shall apply this method to prove irrationality of algebraic, exponential and trigonometric irrational numbers. 
Most of the times, explicit constructions are done of the nice sequence of rational numbers. Arguments used are basic like the binomial theorem and maxima and minima of functions. Thus, this survey article is accessible to  undergraduate students and has a broad appeal for anyone looking for entertaining mathematics. This article brings out classically known beautiful mathematics without using any heavyweight machinery. 

\end{abstract}

\tableofcontents

\newpage

\section{Rational approximations}

\subsection{Introduction}
 By definition a {\em rational number} is a real number which can be written in the form $\frac{a}{b}$ where $a$ and $b$ are  integers and $b$ is non-zero. To simplify matters further, we can always take $gcd(a,b)=1$ by just canceling the common factors. A pair of integers $(a,b)$ such that $gcd(a,b)=1$ is called a {\em co-prime pair}. Thus, a rational number is one which can be written as $\frac{a}{b}$ where $a$ and $b$ are co-prime integers. An {\em irrational number} is a real number which is not rational. Though it may seem trivial to determine if a number is rational or not, it is not always the case. We still do not know rationality (or otherwise) of quite a few well known numbers like $\pi + e$, $2^e$, $\pi e$ and $\pi ^{\sqrt{2}}$ for example. Numbers like $\pi$ and $e$ are known to be irrational, although the proofs of their irrationality are not very trivial. In this expository note, we shall concentrate on one particular characterization of irrational numbers.
 
The symbols $\mathbb N$, $\mathbb R$ and $\mathbb Q$ will respectively denote the set of natural numbers, the set of real numbers and the set of rational numbers. To start with, let us note that any real number can be approximated by rational numbers. This is known as the denseness property of the set of rational numbers. We shall not prove this basic result here because it is a very well known consequence of the Archimedean property of real numbers and can be found in many text books. We shall specify a particular way to approximate a real number which will turn out to be a characterisation of $\mathbb R \setminus \mathbb{Q}$. Let us begin with  a general rational approximation (sometimes called the diophantine approximation).

\subsection{Approximation Theorem}
\begin{defn}
Let $\alpha \in \mathbb{R}$. A sequence $\left\{\frac{p_{n}}{q_{n}}\right\}_{n \geq 1}$ of rational numbers is a \emph{rational approximation} of $\alpha$ if
$$0 < \left|\alpha - \frac{p_{n}}{q_{n}}\right| \to 0 \ \textrm{as} \ n \to \infty. $$
It is also written as $\frac{p_{n}}{q_{n}} \to \alpha$.
\end{defn}

Observe that if $\alpha$ is rational, then $\alpha - \frac{p_{n}}{q_{n}}$ can be chosen to be 0 for all $n$. But the sequence $\{\frac{p_{n}}{q_{n}}\}$ can also be chosen in such a way that $\frac{p_{n}}{q_{n}} \to \alpha$ and $\alpha - \frac{p_{n}}{q_{n}}$ is never $0$. This motivates us to define a \emph{nice rational approximation}. 

\begin{defn}
Let $\alpha \in \mathbb{R}$. A sequence $\left\{\frac{p_{n}}{q_{n}}\right\}_{n \geq 1} $ of rational numbers is a \emph{nice rational approximation} of $\alpha$ if
$$0 < \left|q_{n}\alpha - p_{n}\right| \to 0 \ \textrm{as} \ n \to \infty. $$
It is denoted by $\frac{p_{n}}{q_{n}} \xrightarrow{*} \alpha$.
\end{defn}

The terminology {\em nice rational approximation} is according to us. In literature, the same concept has sometimes been called a {\em very good rational approximation}.
The following theorem comes as a bit of a surprise.

\begin{thm}[Approximation Theorem] Let $\alpha \in \mathbb{R}$. There exists a {\em nice} rational approximation of $\alpha$ if and only if $\alpha$ is irrational.
\end{thm}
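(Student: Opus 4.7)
The plan is to handle the two directions separately, with the ``only if'' direction being the easy one and the ``if'' direction requiring a pigeonhole argument (essentially Dirichlet's theorem).

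For the easy direction (nice rational approximation $\Rightarrow$ irrational), I would argue by contrapositive. Suppose $\alpha = a/b$ with $b > 0$ integer. Then for any integers $p_n, q_n$,
\[
|q_n \alpha - p_n| = \frac{|q_n a - p_n b|}{b}.
\]
Since $q_n a - p_n b$ is an integer and the hypothesis $0 < |q_n\alpha - p_n|$ forces it to be nonzero, we get $|q_n\alpha - p_n| \geq 1/b$ for every $n$. This lower bound prevents the sequence from tending to $0$, contradicting the nice approximation property. So no rational number admits a nice rational approximation.

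For the harder direction (irrational $\Rightarrow$ nice rational approximation exists), the key tool is the following pigeonhole claim: for every $N \in \mathbb{N}$ there exist integers $p, q$ with $1 \leq q \leq N$ such that $|q\alpha - p| < 1/N$. To prove the claim, I would consider the $N+1$ fractional parts $\{0\cdot\alpha\}, \{1\cdot\alpha\}, \ldots, \{N\cdot\alpha\}$ in $[0,1)$, partition $[0,1)$ into $N$ subintervals of length $1/N$, and apply pigeonhole to find $0 \leq i < j \leq N$ with $\{i\alpha\}$ and $\{j\alpha\}$ in the same subinterval. Setting $q := j-i$ and $p := \lfloor j\alpha\rfloor - \lfloor i\alpha\rfloor$ gives $|q\alpha - p| < 1/N$ and $1 \leq q \leq N$. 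Because $\alpha$ is irrational, $q\alpha - p$ is automatically nonzero.

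Finally, I would upgrade the claim to an actual sequence. Start with $N_1 = 1$, which yields $(p_1, q_1)$ with $0 < |q_1\alpha - p_1| < 1$. Inductively, once $(p_k, q_k)$ is chosen, pick $N_{k+1}$ large enough so that $1/N_{k+1} < |q_k\alpha - p_k|$ (possible since the right-hand side is a fixed positive number and $\alpha$ is irrational), and apply the claim to obtain $(p_{k+1}, q_{k+1})$. The resulting sequence satisfies $0 < |q_n\alpha - p_n| < 1/N_n \to 0$, so $\frac{p_n}{q_n} \xrightarrow{*} \alpha$. The main obstacle, and essentially the whole content of the proof, is the pigeonhole step producing the single inequality $|q\alpha - p| < 1/N$; once that is in hand the iteration is routine.
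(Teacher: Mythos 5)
Your proof is correct and uses essentially the same approach as the paper: a divisibility lower bound $|q_n\alpha - p_n| \geq 1/|b|$ for the easy direction, and Dirichlet's pigeonhole argument on the fractional parts $\{0\cdot\alpha\},\ldots,\{N\alpha\}$ for the converse. The only (harmless) deviation is your iterative selection of $N_k$ to force $|q_k\alpha - p_k|$ strictly decreasing; the paper simply applies the pigeonhole claim with $N=n$ for each $n$ and takes that pair directly, since $|q_n\alpha - p_n| < 1/n \to 0$ already suffices.
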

\begin{proof} Let $\left\{\frac{p_n}{q_n}\right\}_{n=1}^{\infty}$ be a nice rational approximation of $\alpha$. We want to show that $\alpha$ is irrational. If $\alpha$ were rational, then $\alpha = \frac{a}{b}$ for some integers $a$ and $b$ with $b \neq 0$.  Now,
$$\left|q_{n}\alpha - p_{n}\right| = \left|q_{n}\frac{a}{b} - p_{n}\right| = \frac{\left|aq_{n} - bp_{n}\right|}{|b|}.$$
Since $\left|q_{n}\alpha - p_{n}\right| \neq 0$, we have $ \left|aq_{n} - bp_{n}\right| \neq 0$. Thus, $\left|aq_{n} - bp_{n}\right|$ is a positive integer. But then $$\left|q_{n}\alpha - p_{n}\right| = \frac{\left|aq_{n} - bp_{n}\right|}{|b|} \geq \frac{1}{|b|}. $$ 
This is a contradiction as $\left|q_{n}\alpha - p_{n}\right| \to 0$. So, $\alpha$ has to be irrational.

Conversely, if $\alpha$ is given to be irrational, then we want to construct a nice rational approximation of it. We shall construct a sequence  $\left\{\frac{p_n}{q_n}\right\}_{n=1}^{\infty}$ of rational numbers such that $q_{n}\alpha - p_{n} \to 0$. We do not worry about $q_{n}\alpha - p_{n} \neq 0$ because we are already working with an irrational number $\alpha$. Thus, it should be sufficient to find integers $p_{n}$ and $q_{n} \neq 0$ such that $\left|q_{n}\alpha - p_{n}\right| < \frac{1}{n}$, for all$\ n \in \mathbb{N}$. This is done by an ingenious application of the {\em Pigeonhole Principle}.
\par
Let, $n \in \mathbb{N}$ be given. Consider the following $n+1$ real numbers in the half open interval $[0,1)$:
$$0, \ \alpha - \lfloor\alpha\rfloor, \ldots , \ n\alpha -\lfloor{n\alpha}\rfloor $$
where $\lfloor\alpha\rfloor$ denotes the largest integer smaller than or equal to $\alpha$, also called the {\em floor function}. 
Along with the $n+1$ real numbers mentioned above, consider also the  $n$ half open intervals:
$$\left[0,\frac{1}{n}\right), \ \left[\frac{1}{n},\frac{2}{n}\right), \ldots, \ \left[\frac{n-1}{n},1\right).$$
These intervals are disjoint and their union is $[0,1)$, i.e., 
$$\bigcup\limits_{j=0}^{n-1} {\left[\frac{j}{n}, \frac{j+1}{n}\right)} = [0,1).$$
Thus, the $n+1$ real numbers mentioned above are contained in the union of $n$ disjoint intervals. By the {\em Pigeonhole Principle}, there are two numbers among the $n+1$ numbers above which are contained in one of the intervals. 
These two numbers are of the form $n_{1}\alpha - \lfloor n_{1}\alpha \rfloor$ and $ n_{2}\alpha - \lfloor n_{2}\alpha \rfloor$ for some $n_1, n_2 \in \{0,1,\ldots, n\}$. So, there is a  $j \in \{0,1,\ldots, n-1\}$ such that
$$ n_{1}\alpha - \lfloor n_{1}\alpha \rfloor , n_{2}\alpha - \lfloor n_{2}\alpha \rfloor\in \left[\frac{j}{n}, \frac{j+1}{n}\right) . $$
This means that 
$$\left|(n_{1}\alpha - \lfloor n_{1}\alpha \rfloor) - (n_{2}\alpha - \lfloor n_{2}\alpha \rfloor)\right| < \frac{1}{n}.$$
In other words,
$$ |(n_{1} - n_{2})\alpha - (\lfloor n_{1}\alpha \rfloor - \lfloor n_{2}\alpha \rfloor)| < \frac{1}{n}.$$
Now take $q_{n} = n_{1} - n_{2}$ and $p_{n} =  \lfloor n_{1}\alpha \rfloor - \lfloor n_{2}\alpha \rfloor$. Clearly, $q_{n} \neq 0$ and $p_{n}$ are integers satisfying $|q_{n}\alpha - p_{n}| < \frac{1}{n}$. We are done.
\end{proof}

In spite of its simple proof, this theorem remains one of the deepest results in elementary number theory. We shall see its usefulness in this article. In view of the Approximation Theorem, to prove a number irrational we just need to prove that it can be nicely approximated by rational numbers. In many cases, we can explicitly find out the nice rational approximation but there can also be circumstances where giving an existential argument is easier than finding an explicit sequence. We now state an equivalent formulation of the Approximation Theorem. Before that, we need a notation. Recall that the floor function $\lfloor x \rfloor$ is defined to be the greatest integer less than or equal to the real number $x$. The {\em fractional part function} $\{ x \}$ is defined to be the difference between $x$ and $\lfloor x \rfloor$.

\begin{defn}
Let $x$ be a real number. Then the fractional part of $x$ is $\{x\}= x -\lfloor x \rfloor$.
\end{defn}

\begin{thm}
Let $\alpha \in \mathbb{R}$. The following are equivalent:
\begin{enumerate}
    \item There is a nice rational approximation of $\alpha$.
    \item There exists a sequence of integers $\{q_{n}\}_{n \geq 1}$ such that 
    $$\{q_{n}\alpha\}\left(\{q_{n}\alpha\}-1\right) \to 0$$ as $n \to \infty$.
    \item $\alpha$ is irrational.
    \end{enumerate}
\end{thm}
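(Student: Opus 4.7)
The plan is to leverage the fact that the Approximation Theorem already gives $(1) \Leftrightarrow (3)$, and to close the triangle by proving $(1) \Leftrightarrow (2)$ directly. Intuitively, both (1) and (2) assert the same geometric fact --- that $q_n \alpha$ can be pushed arbitrarily close to an integer without ever landing on one --- so the proof essentially translates between two encodings of this statement via the floor function.

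For $(1) \Rightarrow (2)$, I would take a nice rational approximation $\{p_n/q_n\}$, set $\epsilon_n = q_n \alpha - p_n$ so that $0 < |\epsilon_n| \to 0$, and for $n$ large enough that $|\epsilon_n| < 1$ split into the cases $\epsilon_n > 0$ and $\epsilon_n < 0$. The floor of $q_n \alpha$ is then $p_n$ or $p_n - 1$ respectively, so $\{q_n \alpha\}$ is either $\epsilon_n$ or $1 + \epsilon_n$. A direct calculation in both cases collapses to
$$\{q_n \alpha\}\bigl(\{q_n \alpha\} - 1\bigr) = -|\epsilon_n|(1 - |\epsilon_n|),$$
which manifestly tends to $0$.

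For $(2) \Rightarrow (1)$, I would set $x_n = \{q_n \alpha\}$ and exploit the elementary bound
$$\min(x_n, 1 - x_n) \le 2\, x_n(1 - x_n) = -2\, x_n(x_n - 1),$$
which holds because one of $x_n$ or $1 - x_n$ is at least $1/2$. The hypothesis then forces $\min(x_n, 1 - x_n) \to 0$, so choosing $p_n$ to be the integer nearest to $q_n \alpha$ produces $|q_n \alpha - p_n| \to 0$, yielding the sought nice rational approximation.

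The one point I expect to require care is that, read literally, condition (2) is trivially satisfied by any rational $\alpha$: picking $q_n$ a multiple of the denominator makes $\{q_n \alpha\} = 0$ and the product vanishes identically. For the equivalence to hold one must interpret (2) with the proviso that $\{q_n \alpha\}(\{q_n \alpha\} - 1)$ is nonzero for every $n$, which is the natural counterpart of the strict inequality $|q_n \alpha - p_n| > 0$ in the definition of a nice rational approximation. With this proviso, the requirements $|q_n \alpha - p_n| > 0$ and $q_n \neq 0$ in the $(2) \Rightarrow (1)$ direction both follow automatically, and the triangle closes.
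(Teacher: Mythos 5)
Your argument is correct and, at the conceptual level, follows the same route as the paper: both hinge on the observation that in a nice rational approximation $p_n$ must eventually be the integer nearest $q_n\alpha$, so that everything reduces to the behaviour of the fractional part $\{q_n\alpha\}$. Where the paper contents itself with an informal equivalence (it asserts $\frac{p_n}{q_n}\xrightarrow{*}\alpha$ if and only if $\{q_n\alpha\}\to 0$ or $\{q_n\alpha\}\to 1$ and stops there), you make the translation precise: the identity $\{q_n\alpha\}\bigl(\{q_n\alpha\}-1\bigr)=-|\epsilon_n|\bigl(1-|\epsilon_n|\bigr)$ in one direction, and the inequality $\min(x_n,1-x_n)\le 2x_n(1-x_n)$ in the other, are exactly the quantitative estimates the paper leaves implicit, and both are correct.

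More significantly, you have caught something the paper overlooks. As literally stated, condition (2) is vacuously satisfied for every rational $\alpha$: taking $q_n$ to be a nonzero multiple of the denominator of $\alpha$ makes $\{q_n\alpha\}=0$ for all $n$, so the product is identically zero and trivially tends to zero, yet (3) fails. The theorem is therefore false as written unless (2) is read, as you propose, with the proviso that $\{q_n\alpha\}\bigl(\{q_n\alpha\}-1\bigr)\neq 0$ for all $n$ --- the exact analogue of the strict inequality $0<|q_n\alpha-p_n|$ built into the definition of a nice rational approximation. Your repair is the natural one and is clearly what the authors intend; with it, your $(2)\Rightarrow(1)$ direction also delivers $q_n\neq 0$ and $|q_n\alpha-p_n|>0$ for free, as you note, so the equivalence closes cleanly.
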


\begin{proof}
If $|q_{n}\alpha - p_{n}| < 1$ then $p_{n}$ should be the closest integer to $q_{n}\alpha$. So, in any nice rational approximation, eventually  $p_{n}$ must be the closest integer to $q_{n}\alpha$. Thus the crux of the matter lies in finding the $q_{n}$. The condition $q_{n}\alpha-p_{n} \to 0 $ as $n \to \infty$ is the same as saying that $q_{n}\alpha$ gets arbitrarily close to an integer for large $n$. Thus, $\frac{p_{n}}{q_{n}} \xrightarrow{*} \alpha$ if and only if $\{q_{n}\alpha\} \to 0$ or $\{q_{n}\alpha\} \to 1$ as $n \to \infty$. Hence, the result. 
\end{proof}

For an $\alpha \in \mathbb{R}$, it may be easier to find $k$ many integer sequences $\left\{a_{n,0}\right\}_{n \geq 1}$ , $\left\{a_{n,1}\right\}_{n \geq 1}$, \ldots, $\left\{a_{n,k}\right\}_{n \geq 1}$ for some $k\geq2$ such that 
$$0 \neq \left|a_{n,k}\alpha^{k} + \dots +a_{n,1}\alpha+ a_{n,0}\right| \to 0  \ \textrm{as} \ n \to \infty$$
rather than finding a nice rational approximation directly. 
We shall see in the following theorem that such sequences are also sufficient to conclude irrationality of a number.

\begin{thm}
\label{thm 1.2.3}
Let $\alpha \in \mathbb{R}$. The following are equivalent:
\begin{enumerate}
    \item There is a nice rational approximation for $\alpha$, say $\frac{p_{n}}{q_{n}}\xrightarrow{*} \alpha$.
    \item There exists a natural number $k \geq 1$ for which there are integer sequences $\{a_{n,0}\}_{n \geq 1}$ , $\{a_{n,1}\}_{n \geq 1}$, \ldots, $\{a_{n,k}\}_{n \geq 1}$ such that 
$$0 \neq \left|a_{n,k}\alpha^{k} + \dots +a_{n,1}\alpha+ a_{n,0}\right| \to 0  \ \textrm{as} \ n \to \infty.$$
   \item $\alpha$ is irrational.
\end{enumerate}
\end{thm}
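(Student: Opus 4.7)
The plan is to exploit the earlier Approximation Theorem, which already gives us $(1) \iff (3)$. So it suffices to tie statement $(2)$ into this cycle, for instance by proving $(1) \Rightarrow (2)$ and $(2) \Rightarrow (3)$.

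For $(1) \Rightarrow (2)$, I would take the trivial choice $k = 1$. Given a nice rational approximation $\frac{p_n}{q_n} \xrightarrow{*} \alpha$, simply set $a_{n,1} = q_n$ and $a_{n,0} = -p_n$. Then
$$\left| a_{n,1}\alpha + a_{n,0} \right| = |q_n \alpha - p_n|,$$
which is a positive quantity tending to $0$ by the very definition of a nice rational approximation. This direction is immediate.

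For $(2) \Rightarrow (3)$, I would argue by contradiction, mimicking the easy direction of the Approximation Theorem. Suppose $\alpha$ were rational, say $\alpha = a/b$ with $a, b \in \mathbb{Z}$ and $b \neq 0$. Substituting,
$$a_{n,k}\alpha^k + a_{n,k-1}\alpha^{k-1} + \cdots + a_{n,1}\alpha + a_{n,0} = \frac{a_{n,k}a^k + a_{n,k-1}a^{k-1}b + \cdots + a_{n,0}b^k}{b^k}.$$
The numerator is an integer, and by hypothesis the whole expression is nonzero, so the numerator is a nonzero integer and hence has absolute value at least $1$. Therefore the expression is bounded below in absolute value by $1/|b|^k$, which contradicts the assumption that it tends to $0$. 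Hence $\alpha$ must be irrational.

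Combining $(1) \Rightarrow (2) \Rightarrow (3)$ with $(3) \Rightarrow (1)$ from the Approximation Theorem closes the loop. There is no serious obstacle here; the only point requiring minor care is the explicit verification in the $(2) \Rightarrow (3)$ step that clearing denominators produces a genuine integer of absolute value at least $1$, which is exactly the same idea that made the original Approximation Theorem work.
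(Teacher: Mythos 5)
Your proposal is correct and mirrors the paper's own proof: the paper likewise observes that $(2) \Rightarrow (3)$ is the only non-trivial implication and establishes it by the same denominator-clearing argument, bounding the expression below in absolute value by $1/|b|^k$. The remaining links are exactly as you describe, with $(1) \Rightarrow (2)$ trivial via $k = 1$ and $(3) \Rightarrow (1)$ supplied by the Approximation Theorem.
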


\begin{proof}
The only non-trivial part of the proof is that (2) $\implies$ (3). We shall argue by contradiction. Assume $\alpha \in \mathbb{Q}$. Then $\alpha= \frac{a}{b}$ for some co-prime integer pair $(a,b)$. Then 
$$0 \neq \left|a_{n,k}\left(\frac{a}{b}\right)^{k} + \dots +a_{n,1}\frac{a}{b}+ a_{n,0}\right| \geq \frac{1}{b^k}.$$
Thus 
$$ \left|a_{n,k}\alpha^{k} + \dots +a_{n,1}\alpha+ a_{n,0}\right| \nrightarrow 0 \ \textrm{ as } \ n\to \infty.$$
That is a contradiction. 
\end{proof}

We shall now apply the results obtained in this section to concrete examples.

\section{Algebraic irrationals}

In this section, we shall discuss irrationality of algebraic irrational numbers. We shall start from the simplest algebraic irrational numbers of the form $\sqrt{m}$, where $m\in \mathbb{N}$ and $m$ is not a perfect square. In this case, we shall produce a nice rational approximation explicitly. In the later subsections, we shall use several derived results to conclude about irrationality of a few other complicated irrational numbers indirectly.

\subsection{Finding a nice rational approximation for $\sqrt{m}\notin \mathbb{Z}$}
\begin{thm}
\label{thm 3.1}
Suppose $m\in \mathbb{N}$ is not the square of an integer. Let $z=\left\lfloor \sqrt{m}\right \rfloor$, where $\lfloor.\rfloor$ denotes the greatest integer function. Consider the integers 
$$p_n=\sum_{k=1}^{n}\binom{2n-1}{2k-1}{m}^{\left(n-k\right)}{z}^{2k-1}  \text{ and } q_n=\sum_{k=0}^{n-1}\binom{2n-1}{2k}{m}^{n-1-k}{z}^{2k}.$$
Then $\left\{\frac{p_n}{q_n}\right\}_{n \ge 1}$ is a nice rational approximation to $\sqrt{m}.$
\end{thm}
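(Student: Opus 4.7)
The plan is to recognize that the integers $p_n$ and $q_n$ encode the rational and irrational parts of $(z+\sqrt{m})^{2n-1}$. Concretely, I would expand $(z+\sqrt{m})^{2n-1}$ by the binomial theorem and split the sum according to parity of the exponent of $\sqrt{m}$: the even-exponent terms (using $(\sqrt{m})^{2k}=m^k$) contribute an integer, while the odd-exponent terms (using $(\sqrt{m})^{2k-1}=m^{k-1}\sqrt{m}$) contribute an integer multiple of $\sqrt{m}$. After a reversal of summation index, a direct comparison with the definitions in the statement shows that
$$(z+\sqrt{m})^{2n-1} = p_n + q_n\sqrt{m}.$$
Replacing $\sqrt{m}$ by $-\sqrt{m}$ throughout (which flips the sign on exactly the odd-exponent terms) gives the companion identity
$$(z-\sqrt{m})^{2n-1} = p_n - q_n\sqrt{m}.$$

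From this second identity I immediately obtain
$$|q_n\sqrt{m}-p_n| = |\,\sqrt{m}-z\,|^{\,2n-1}.$$
Since $m$ is not a perfect square, $z=\lfloor\sqrt{m}\rfloor$ is an integer with $z<\sqrt{m}<z+1$, so $0<\sqrt{m}-z<1$. Hence the right-hand side is strictly positive and tends to $0$ as $n\to\infty$. This is exactly the nice-approximation condition $0<|q_n\sqrt{m}-p_n|\to 0$, provided we also check that $q_n\neq 0$; but the $k=0$ term in the definition of $q_n$ equals $m^{n-1}\geq 1$ and every other term is non-negative, so $q_n\geq 1$. Note that we do not need to assume $\sqrt{m}$ is irrational a priori: indeed, the theorem together with the Approximation Theorem gives an alternative proof of that classical fact.

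The only real step requiring care is the bookkeeping in the binomial identity $p_n+q_n\sqrt{m}=(z+\sqrt{m})^{2n-1}$. That is where I expect the main (but purely computational) obstacle to lie: one has to match the indexing used in the statement, which runs $z$-powers from low to high, with the natural binomial indexing, which runs them from high to low, and observe that the symmetry $\binom{2n-1}{j}=\binom{2n-1}{2n-1-j}$ makes the two expressions coincide. Once this matching is carried out the rest of the argument is the one-line observation above.
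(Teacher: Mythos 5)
Your proposal is correct, and it rests on exactly the same idea as the paper: the binomial expansion of $(\sqrt{m}\pm z)^{2n-1}$, split by parity of the $\sqrt{m}$-exponent, produces the integers $p_n$ and $q_n$, and then $0<(\sqrt{m}-z)^{2n-1}\to 0$ finishes the argument. The one place where the paper is cleaner: it expands $(\sqrt{m}-z)^{2n-1}$ directly, and the indexing in the definitions of $p_n$ and $q_n$ matches that expansion term-by-term with no reindexing at all, immediately giving $(\sqrt{m}-z)^{2n-1}=\sqrt{m}\,q_n-p_n$. By instead expanding $(z+\sqrt{m})^{2n-1}$ and then passing to the conjugate, you create the bookkeeping burden you flagged at the end (the index reversal via $\binom{2n-1}{j}=\binom{2n-1}{2n-1-j}$) which the paper's choice avoids entirely. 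Your reindexing does check out, and the extra remark that $q_n\ge m^{n-1}\ge 1$ is a small explicit touch the paper leaves implicit, but the route through $(z+\sqrt{m})^{2n-1}$ is a detour rather than a genuinely different proof.
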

\begin{proof}
As $m\ge 2$, we have $z\in\mathbb{N}$.  Also, $0<\left(\sqrt{m}-z\right)<1$. A binomial series expansion gives that 
\begin{align*}
& \left(\sqrt{m}-z\right)^{2n-1} \\
= & \sum_{j=0}^{2n-1}\binom{2n-1}{j}(\sqrt{m})^{2n-1-j}(-z)^{j}\\
= & \sum_{k=0}^{n-1}\binom{2n-1}{2k}(\sqrt{m})^{2n-1-2k}(-z)^{2k}+\sum_{k=1}^{n}\binom{2n-1}{2k-1}(\sqrt{m})^{\left(2n-1-(2k-1)\right)}(-z)^{2k-1}\\
= & \sqrt{m} \sum_{k=0}^{n-1}\binom{2n-1}{2k}{m}^{n-1-k}{z}^{2k}-\sum_{k=1}^{n}\binom{2n-1}{2k-1}{m}^{\left(n-k\right)}{z}^{2k-1}=\sqrt{m} q_n - p_n.\end{align*}
Now,  $0 \neq \left(\sqrt{m}-z\right)^{2n-1} \to 0+$ because $0<\left(\sqrt{m}-z\right)<1$. So, $$0 \neq  \left|\sqrt{m} q_n - p_n\right| \to 0. $$
\end{proof}

This implies the irrationality of  $\sqrt{m}$ when $\sqrt{m} \notin \mathbb{Z}$. 

\subsection{Irrationality of $\sqrt[m]{a} \notin \mathbb{N}$} 
In this subsection, we shall prove irrationality for $\sqrt[m]{a}$ where $a$ and $m$ are natural numbers but $\sqrt[m]{a}$  is not. 

\begin{thm}
\label{thm 3.2}
If $a,m \in \mathbb{N}$ and $\sqrt[m]{a} \notin \mathbb{N}$, then $\sqrt[m]{a}$ is irrational.
\end{thm}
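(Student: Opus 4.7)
Let $\alpha = \sqrt[m]{a}$. First I would dispense with the edge cases: if $m = 1$ then $\alpha = a \in \mathbb{N}$, contradicting the hypothesis, so $m \geq 2$. The plan is to produce integer sequences of the form required by Theorem \ref{thm 1.2.3}(2), with the fixed degree bound $k = m-1$, and then to invoke the implication (2) $\implies$ (3) to conclude irrationality. Avoiding the explicit construction of a nice rational approximation directly is the whole point of proving Theorem \ref{thm 1.2.3}, and this is the natural place to cash that in.

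The key observation is to mimic the trick of Theorem \ref{thm 3.1}: set $z = \lfloor \alpha \rfloor$, so that, since $\alpha \notin \mathbb{N}$, we have $0 < \alpha - z < 1$. Consequently $(\alpha - z)^n$ is a nonzero real number tending to $0$ as $n \to \infty$. Expanding by the binomial theorem gives
$$(\alpha - z)^n = \sum_{j=0}^{n} \binom{n}{j}(-z)^{n-j}\alpha^{j},$$
which is a polynomial in $\alpha$ with integer coefficients, but of degree $n$, growing with $n$. The condition in Theorem \ref{thm 1.2.3}(2) requires a \emph{fixed} degree bound $k$, so I would use the defining relation $\alpha^m = a$ to reduce: for $j = qm + r$ with $0 \leq r < m$ one has $\alpha^{j} = a^{q}\alpha^{r}$, and collecting terms by the residue $r$ yields integers
$$c_{n,r} = \sum_{\substack{0 \leq j \leq n \\ j \equiv r \,(\mathrm{mod}\,m)}} \binom{n}{j}(-z)^{n-j} a^{(j-r)/m}, \qquad 0 \leq r \leq m-1,$$
satisfying $c_{n,m-1}\alpha^{m-1} + \cdots + c_{n,1}\alpha + c_{n,0} = (\alpha - z)^{n}$.

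Now the right-hand side is nonzero (because $\alpha \neq z$) and converges to $0$, so the absolute value of the left-hand side is a nonzero null sequence. Taking $k = m - 1 \geq 1$ and applying Theorem \ref{thm 1.2.3}, (2) $\implies$ (3), we conclude that $\alpha = \sqrt[m]{a}$ is irrational. The plan essentially has no hard step: the only thing that needs a little care is verifying that the reduction via $\alpha^{m} = a$ preserves integrality of the coefficients (clear, since $a$ and $z$ are integers) and that the resulting linear combination genuinely equals $(\alpha-z)^{n}$ as a real number, so the nonvanishing is inherited. Everything else is mechanical binomial bookkeeping.
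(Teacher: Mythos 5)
Your proposal is correct and follows essentially the same route as the paper: set $z=\lfloor\alpha\rfloor$, expand $(\alpha-z)^n$ by the binomial theorem, reduce powers via $\alpha^m=a$ to collect into a degree-$(m-1)$ integer combination, and invoke Theorem~\ref{thm 1.2.3}. The only cosmetic difference is that the paper chooses the exponent $mn-1$ so each residue class gets exactly $n$ terms, whereas you use a generic exponent $n$; both work equally well.
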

\begin{proof}
Take $z=\lfloor \sqrt[m]{a}\rfloor$. By the assumption $\sqrt[m]{a} \notin \mathbb{N}$, we have $z \neq \sqrt[m]{a} $. So, $0<\left(\sqrt[m]{a}-z\right) < 1$. If $\alpha=\sqrt[m]{a}$, then
\begin{align*}\left(\sqrt[m]{a}-z\right)^{mn-1}  & =\sum_{j=0}^{mn-1}\binom{mn-1}{j}\left(\sqrt[m]{a}\right)^{j}(-z)^{mn-1-j} \\
& =\sum_{l=0}^{m-1}\sum_{k=0}^{n-1} \binom{mn-1}{mk+l} \left(\sqrt[m]{a}\right)^{mk+l}(-z)^{mn-1-(mk+l)}\\
& =\sum_{l=0}^{m-1} \alpha^{l} \sum_{k=0}^{n-1} \binom{mn-1}{mk+l} a^{k}(-z)^{mn-1-(mk+l)}\\
& =\sum_{l=0}^{m-1} a_{n,l} {\alpha^{l}},\end{align*}
where $$ a_{n,l} =\sum_{k=0}^{n-1} \binom{mn-1}{mk+l} a^{k}(-z)^{mn-1-(mk+l)} \in \mathbb{Z}.$$
As $n \to \infty $, $0 \neq \left(\sqrt[m]{a}-z\right)^{mn-1} \to 0+$, so, $$0 \neq \left| \sum_{l=0}^{m-1} a_{n,l} {\alpha^{l}} \right| \to 0$$ 
So, from Theorem \ref{thm 1.2.3}, $\alpha=\sqrt[m]{a}$ is irrational.
\end{proof}

\subsection{General algebraic irrational numbers}

We shall start with a lemma, which we shall need to prove our results. It is also of an independent interest.

\begin{lem}
\label{lem 3.3}
Let $\alpha$ be a root of a monic polynomial $x^m+\sum_{k=0}^{m-1} b_k x^k$ with integer coefficients. If $n$ is any natural number and $c_0$, $c_1$,  \ldots , $c_n$ are integers, then there exist $m$ integers $d_0$, $d_1$, \ldots , $d_{m-1}$ such that 
$$\sum_{k=0}^{n} c_k \alpha^k =\sum_{k=0}^{m-1} d_k \alpha^k .$$
\end{lem}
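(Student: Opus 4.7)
The plan is to prove the lemma by induction on $n$, using the defining relation $\alpha^m = -\sum_{k=0}^{m-1} b_k \alpha^k$ as a one-step reduction that decreases the maximal exponent by at least one while keeping all coefficients integral.

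For the base case, when $n \leq m-1$ there is nothing to do: simply set $d_k = c_k$ for $0 \leq k \leq n$ and $d_k = 0$ for $n < k \leq m-1$.

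For the inductive step, assume the result holds whenever the top exponent is at most $n-1$, and consider an expression $\sum_{k=0}^{n} c_k \alpha^k$ with $n \geq m$. First I would isolate the top term and substitute
\[
\alpha^{n} = \alpha^{n-m} \cdot \alpha^{m} = -\sum_{k=0}^{m-1} b_k \alpha^{n-m+k}.
\]
Every exponent on the right is at most $n-1$, and all $b_k$ are integers, so
\[
\sum_{k=0}^{n} c_k \alpha^k \;=\; \sum_{k=0}^{n-1} c_k \alpha^k \;-\; c_n \sum_{k=0}^{m-1} b_k \alpha^{n-m+k}
\]
can be rewritten as $\sum_{k=0}^{n-1} c'_k \alpha^k$ with integer coefficients $c'_k$. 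The inductive hypothesis then produces integers $d_0, \ldots, d_{m-1}$ with $\sum_{k=0}^{n-1} c'_k \alpha^k = \sum_{k=0}^{m-1} d_k \alpha^k$, and we are done.

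There is essentially no obstacle here — the monicity of the polynomial is what makes the reduction work (allowing us to solve for $\alpha^m$ with integer coefficients rather than rational ones), and integrality of the $d_k$ is preserved at every step because only integer linear combinations occur. The only mild bookkeeping is to note the top exponent strictly drops after one reduction, which guarantees termination of the induction.
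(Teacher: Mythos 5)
Your proof is correct and follows essentially the same approach as the paper: both reduce the top-degree term via $\alpha^m = -\sum_{k=0}^{m-1} b_k \alpha^k$, dropping the maximal exponent by one at each step. The paper phrases this as an iterative algorithm that terminates in finitely many steps, whereas you phrase it as strong induction on $n$; these are the same argument.
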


\begin{proof}
We shall discuss an algorithm, by which,  we can get the expression $\sum_{k=0}^{n} c_k \alpha^k =\sum_{k=0}^{m-1} d_k \alpha^k $. The result will be automatically proved when we justify the algorithm. 

If $n<m$, then we are done, as $d_k=c_k$, for $0\le k \le n$ and $d_k=0$, for $n < k \le m$ shall serve the purpose.

For $n \ge m$, note that $\alpha^m=- \sum_{k=0}^{m-1} b_k \alpha^k$, which means that 
\begin{align*}\sum_{k=0}^{n} c_k \alpha^k & =  c_n \alpha^{(n-m)} \cdot \alpha^m+\sum_{k=0}^{n-1} c_k \alpha^k \\
& =c_n \alpha^{(n-m)}\left(- \sum_{k=0}^{m-1} b_k \alpha^k\right)+\sum_{k=0}^{n-1} c_k \alpha^k \\
&=\sum_{k=n-m}^{n-1}{\left(c_k-b_{k-(n-m)} c_n\right) \alpha^k}+\sum_{k=0}^{n-m-1} c_k \alpha^k \\
& \text{ (the second sum appears only when } n-m-1 \ge 0 \text{ )} \\
&=\sum_{k=0}^{n-1} c'_k \alpha^k \end{align*}
for some integers $c'_k$ for $k = 0,1, \ldots ,n-1$.
Now, if $n=m$, we are done. If not, continue this process, until an expression is obtained, in which the highest power of $\alpha$ is less than $m$. This will be achieved in finitely many steps because the highest power of $\alpha$ decreases by $1$ in each step. So,
$$\sum_{k=0}^{n} c_k \alpha^k =\sum_{k=0}^{n-1} c'_k \alpha^k = \cdots = \sum_{k=0}^{m-1} d_k \alpha^k$$ for some integers $d_0, d_1, \ldots , d_{m-1}.$

\end{proof}

\begin{thm}
\label{thm 3.4}
If $\alpha \notin \mathbb{Z}$ is a root of a monic polynomial with integer coefficients, then $\alpha$ is irrational.
\end{thm}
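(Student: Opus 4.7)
The plan is to mirror the strategy used for $\sqrt[m]{a}$ in Theorem \ref{thm 3.2}, but now using Lemma \ref{lem 3.3} to replace the explicit arithmetic of the root extraction. The key idea is again to produce a sequence of integer linear combinations of powers of $\alpha$ that tends to zero but is never zero, so that Theorem \ref{thm 1.2.3} applies.

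First I would note that $\alpha$ is a root of a monic polynomial $p(x) = x^m + \sum_{k=0}^{m-1} b_k x^k$ with integer coefficients. We may assume $m \geq 2$: if $m = 1$ then $\alpha = -b_0 \in \mathbb{Z}$, contradicting the hypothesis. Next, set $z = \lfloor \alpha \rfloor$, so that $0 < \alpha - z < 1$ because $\alpha \notin \mathbb{Z}$. Hence
\[
0 \neq (\alpha - z)^n \longrightarrow 0 \quad \text{as } n \to \infty.
\]

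The main step is to rewrite $(\alpha - z)^n$ as an integer linear combination of $1, \alpha, \ldots, \alpha^{m-1}$. Expanding by the binomial theorem,
\[
(\alpha - z)^n = \sum_{j=0}^{n} \binom{n}{j}(-z)^{n-j}\,\alpha^j = \sum_{j=0}^{n} c_j^{(n)} \alpha^j,
\]
where the $c_j^{(n)}$ are integers. By Lemma \ref{lem 3.3}, applied to this integer combination, there exist integers $a_{n,0}, a_{n,1}, \ldots, a_{n,m-1}$ such that
\[
\sum_{j=0}^{n} c_j^{(n)} \alpha^j = \sum_{k=0}^{m-1} a_{n,k}\, \alpha^k.
\]
So we obtain integer sequences $\{a_{n,0}\}, \ldots, \{a_{n,m-1}\}$ with
\[
0 \neq \left| a_{n,m-1} \alpha^{m-1} + \cdots + a_{n,1} \alpha + a_{n,0} \right| = |(\alpha - z)^n| \longrightarrow 0.
\]
Since $m - 1 \geq 1$, condition (2) of Theorem \ref{thm 1.2.3} is satisfied, and hence $\alpha$ is irrational.

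The only potentially subtle point is that the rewritten combination is nonzero, but this is immediate: its value equals $(\alpha - z)^n$, which is nonzero by the choice of $z$ and the fact that $\alpha$ is not an integer. No deep estimates are needed; all the heavy lifting is done by Lemma \ref{lem 3.3}, which converts an arbitrary integer polynomial in $\alpha$ into one of bounded degree, and by Theorem \ref{thm 1.2.3}, which packages the pigeonhole-based irrationality criterion in a form directly suited to this kind of argument.
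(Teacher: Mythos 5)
Your proof is correct and follows essentially the same route as the paper: set $z = \lfloor \alpha \rfloor$, expand $(\alpha - z)^n$ by the binomial theorem, reduce the degree of the resulting integer polynomial in $\alpha$ via Lemma \ref{lem 3.3}, and invoke Theorem \ref{thm 1.2.3}. Your explicit observation that one may assume $m \geq 2$ (ensuring $k = m-1 \geq 1$ as required by Theorem \ref{thm 1.2.3}) is a small detail the paper leaves implicit, and is a welcome touch of rigour.
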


\begin{proof}
We again use the binomial theorem to our advantage. Let $\alpha$ be the root of the monic polynomial $x^m+\sum_{k=0}^{m-1} b_k x^k$. As $\alpha$ is not an integer, $\alpha \neq \lfloor \alpha \rfloor =z $ (say).
Now,
$$
0 \neq (\alpha - z)^n = \sum_{k=0}^{n} \binom{n}{k} \alpha^{k}(-z)^{n-k} = \sum_{k=0}^{n} c_{n,k} \alpha^k$$
where $c_{n,k} = \binom{n}{k}(-z)^{n-k} \in \mathbb{Z}$. By Lemma \ref{lem 3.3}, we have the last quantity to be 
$\sum_{k=0}^{m-1} d_{n,k} \alpha^k$
for some integers $d_{n,k}$. As $0<\alpha-z<1$, we have $0 \neq (\alpha-z)^n \to 0+$ as $n \to \infty $. So,
$$ 0 \neq \left| \sum_{k=0}^{m-1} d_{n,k} \alpha^k \right| \to 0.$$
So, from Theorem \ref{thm 1.2.3}, $\alpha$ is irrational.
\end{proof}

An obvious extension of the previous theorem is presented below. Before that, we need to prove a little lemma.

\begin{lem}
\label{lem 3.5}
Let $a \in \mathbb{Z}-\{0\} $. If $a\alpha$ is irrational for some $\alpha \in \mathbb{R}$, then $\alpha$ is irrational.

\end{lem}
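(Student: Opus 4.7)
My plan is to prove the contrapositive: if $\alpha$ is rational, then $a\alpha$ must also be rational for any $a \in \mathbb{Z} \setminus \{0\}$. This is immediate from the elementary fact that the rationals are closed under multiplication by integers, so the contrapositive will follow in one line.

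In detail, I would first suppose, for contradiction, that $\alpha$ is rational. Then I can write $\alpha = p/q$ for some integers $p$ and $q$ with $q \neq 0$. Multiplying by $a$, I obtain $a\alpha = ap/q$. Since $ap$ is an integer (as the product of two integers) and $q$ is a non-zero integer, the quotient $ap/q$ is a rational number by definition. This directly contradicts the hypothesis that $a\alpha$ is irrational, so $\alpha$ must have been irrational to begin with.

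There is essentially no obstacle here; the lemma is a one-line consequence of the definition of a rational number and the closure of $\mathbb{Z}$ under multiplication. The only thing worth noting is the role of the hypothesis $a \neq 0$, which is needed only to make the statement non-vacuous (otherwise $a\alpha = 0$ would always be rational, and the hypothesis of the lemma could never be satisfied). Thus no additional lemma or machinery from earlier in the paper is required.
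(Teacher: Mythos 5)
Your argument is correct, but it follows a different route from the paper's. You prove the contrapositive directly: if $\alpha = p/q$ is rational then $a\alpha = ap/q$ is rational, contradicting the hypothesis; this is a one-line appeal to the closure of $\mathbb{Q}$ under multiplication by integers. The paper instead stays entirely inside its own framework of nice rational approximations: since $a\alpha$ is irrational, the Approximation Theorem furnishes a sequence with $0 < |q_n(a\alpha) - p_n| \to 0$, and rewriting this as $0 < |(aq_n)\alpha - p_n| \to 0$ exhibits $\{p_n/(aq_n)\}$ as a nice rational approximation of $\alpha$, whence $\alpha$ is irrational by the same theorem. Your version is shorter and uses nothing beyond the definition of a rational number, which is a genuine advantage; the paper's version is less elementary but has the pedagogical virtue of demonstrating how closure properties of irrationality can be extracted from the approximation criterion itself, reinforcing the article's central tool. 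One small remark on your aside about $a \neq 0$: the hypothesis is not merely there to keep the statement non-vacuous. In the paper's proof it is also what guarantees that $aq_n \neq 0$, so that $p_n/(aq_n)$ is a legitimate rational number; your contrapositive argument happens not to need it for that purpose, but it is worth noticing that the two proofs use the hypothesis slightly differently.
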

\begin{proof}
As $a\alpha$ is irrational, there exists some nice rational approximation  $\left\{ \frac{p_n}{q_n}\right\}_{n \in \mathbb{N}}$ of $a\alpha$. So,
$$ 
0<\left| q_n a \alpha - p_n\right| \to 0
$$
as $n\to \infty$. It is evident that $\left\{\frac{p_n}{aq_n}\right\}_{n \in \mathbb{N}}$ is a nice rational approximation of $\alpha$. So, $\alpha$ is irrational.

\end{proof}

\begin{thm}
\label{thm 3.6}
If $\alpha$ is a root of a polynomial with integer coefficients and with non-zero leading coefficient $a$, and if $a\alpha \notin \mathbb{Z}$, then $\alpha $ is irrational.

\end{thm}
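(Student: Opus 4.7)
The strategy is to reduce the general polynomial case to the monic case already handled in Theorem \ref{thm 3.4}, then apply Lemma \ref{lem 3.5} to pass from $a\alpha$ back to $\alpha$. The idea is the classical trick of clearing the leading coefficient: starting from
$$a\alpha^m + b_{m-1}\alpha^{m-1} + \cdots + b_1\alpha + b_0 = 0,$$
I would multiply both sides by $a^{m-1}$ so that the first term becomes $(a\alpha)^m$ and each subsequent term $a^{m-1}b_k\alpha^k$ can be rewritten as $a^{m-1-k}b_k\cdot (a\alpha)^k$. This exhibits $a\alpha$ as a root of the monic integer polynomial
$$x^m + a^{m-1}b_{m-1}\cdot\frac{x^{m-1}}{a^{m-1}}\cdots\ \ \text{(after regrouping)}\ \ = x^m + \sum_{k=0}^{m-1} a^{m-1-k}b_k\, x^k.$$

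Since the hypothesis tells us $a\alpha \notin \mathbb{Z}$, Theorem \ref{thm 3.4} applies directly to this new monic integer polynomial and yields that $a\alpha$ is irrational. Now Lemma \ref{lem 3.5}, with the nonzero integer $a$, gives that $\alpha$ itself is irrational, which is what we wanted.

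The only non-routine step is the first one, namely verifying that the multiplication by $a^{m-1}$ really produces integer coefficients and leaves $a\alpha$ as a root; this is just bookkeeping with exponents, since $a^{m-1-k}b_k$ is manifestly an integer for each $0 \le k \le m-1$. I do not anticipate any genuine obstacle here: both of the ingredients (Theorem \ref{thm 3.4} and Lemma \ref{lem 3.5}) are already at our disposal, and the substitution $y = a\alpha$ is a standard device. Thus the theorem is essentially a corollary of what has been established, and no new irrationality argument needs to be invented.
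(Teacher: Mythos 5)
Your proof is correct and follows exactly the same route as the paper: multiply the relation by $a^{m-1}$, regroup so that $a\alpha$ is a root of the monic integer polynomial $x^m + \sum_{k=0}^{m-1} a^{m-1-k}b_k x^k$, invoke Theorem \ref{thm 3.4} to get irrationality of $a\alpha$, and finish with Lemma \ref{lem 3.5}. There is nothing to add; the argument matches the paper's proof step for step.
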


\begin{proof}
Let $\alpha$ be the root of the polynomial $f(x)=ax^m+\sum_{k=0}^{m-1} c_k x^k$ with $a \in \mathbb{Z}-\{0\}$, where $c_0$, $c_1$,..., $c_{m-1}$ are integers.
.So, the polynomial
$$
a^{m-1} f(x) =a^m x^m + \sum_{k=0}^{m-1} c_k a^{m-1} x^k=(ax)^{m}+ \sum_{k=0}^{m-1} c_k a^{m-k-1} (ax)^k$$
has a root $\alpha$.
So,the polynomial  
$$g(z) = z^m + \sum_{k=0}^{m-1} c_k a^{m-k-1} z^k $$ is a monic polynomial with integer coefficients with a root $a\alpha$.
So, from Theorem \ref{thm 3.4}, as $a\alpha \notin \mathbb{Z} $, $a\alpha$ is irrational. As $a \in \mathbb{Z}-\{0\}$, from Lemma \ref{lem 3.5}, $\alpha$ is irrational.
\end{proof}

\begin{note}
If $\alpha$ is a root of a polynomial with integer coefficients and with non-zero leading coefficient $a$, and if $a\alpha \in \mathbb{Z}$, i.e., if $a\alpha=z$ for some $z \in \mathbb{Z}$, then $\alpha = \frac{z}{a}$ is rational.
\end{note}

Let us try an example to see how the above theorem can be applied.
 
\begin{exmp}{\em 
We want to determine the irrationality of the real roots of the polynomial $f(x)=2x^3-5x^2+x+1$.
Using our knowledge of calculus for determining maxima-minima of a function, we see that $f(x)$ has three real roots.

We note that $f\left(-\frac{1}{2}\right)=-1<0$ and $f(0)=1>0$. Also, $f\left(\frac{1}{2}\right)=\frac{1}{2}>0$ and $f(1)=-1<0$.Finally, $f(2)=-1<0$ and $f\left(\frac{5}{2}\right)=\frac{7}{2}>0$. So, if $\alpha<\beta<\gamma$ are the three real roots of $f(x)$, then from the continuity of $f$ over $\mathbb{R}$, we get that $-\frac{1}{2}<\alpha<0$, $\frac{1}{2}<\beta<1$ and $2<\gamma<\frac{5}{2}$.

So, $ -1<2\alpha<0$, $1<2\beta<2$ and $4<2\gamma<5$.

Here, each of $\alpha$, $\beta$ and $\gamma$ is a root of the polynomial $f(x)$ with integer coefficients and leading coefficient 2. Also, none of $2\alpha$, $2\beta$ and $2\gamma$ is integer, as is clear from the above discussion. So, from Theorem \ref{thm 3.6}, it is evident that each of $\alpha$, $\beta$ and $\gamma$ is irrational.}
\end{exmp}

For further reading, we mention Roth's Theorem which says that \textit{every irrational algebraic number $\alpha$ has approximation exponent equal to 2.} This means that the largest possible value of $\mu$, for which $0 < \left| \alpha - \frac{p}{q} \right| < \frac{1}{q^{\mu}}$, has infinitely many solutions for $p,q \in \mathbb{Z}$ and $q > 0$, is 2. So, for an algebraic irrational number $\alpha$, for $\mu=2$, as there are infinitely many solutions for such $p$, $q$, we can find for each $n \in \mathbb{Z}$,  some integer $q_n >n$ and a corresponding integer $p_n$, such that $0 < \left| \alpha - \frac{p_n}{q_n} \right| < \frac{1}{q_n^{2}}$. So, 
$$0<\left| q_n \alpha - p_n \right| < \frac{1}{q_n} < \frac{1}{n}.$$ 
So, $\left\{ {\frac{p_n}{q_n}}\right\} \xrightarrow{*} \alpha $. See \cite{Roth} for more on Roth's Theorem.

 There is a stronger version of the theorem, namely Hurwitz's theorem, which states the following. \textit{For every irrational number $\alpha$, there exist infinitely many coprime integers $p$ and $q$, such that $\left| \alpha - \frac{p}{q} \right| < \frac{1}{\sqrt{5}q^{2}}$.}

\section{$e$ and other related irrationals}

The Euler's constant $e$ can be defined in several ways, for our purpose, we shall adopt the definition that $e$ is the sum of the convergent series $\sum_{n=0}^{\infty}\frac{1}{n!}= 1 + \frac{1}{1!} +\frac{1}{2!} +\cdots $. The exponential function $e^{x}$ is defined as
$$e^{x} = \sum_{n=0}^{\infty}\frac{x^{n}}{n!}$$
which converges for all real $x$.

\begin{note}
We would like to mention here that this series converges absolutely and uniformly on any compact interval, is differentiable and the derivative is the same as the series again. We shall not prove these elementary properties here because detailed discussions on these topics are available in many standard texts on real analysis.
\end{note}

We shall start by proving irrationality of $e$.

\subsection{Irrationality of  e }
\begin{thm}\label{thm 3.1.1}
Let $p_{n}= \sum_{i=0}^{n} \frac{n!}{i!} = n!+\frac{n!}{1!}+\cdots \frac{n!}{n!} $ and $q_{n}= n!$. Then $\left\{\frac{p_{n}}{q_{n}}\right\}_{n\geq1}$ is a nice rational approximation of $e$. 
\end{thm}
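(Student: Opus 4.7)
The plan is to observe that $p_n$ is exactly $q_n$ times the $n$-th partial sum of the series $\sum_{i=0}^\infty \frac{1}{i!}$ defining $e$, so the quantity $q_n e - p_n$ is simply $n!$ times the tail of that series. This should be automatically positive (so nonzero), and shrink to $0$ geometrically fast.

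First I would verify that $p_n$ and $q_n$ are positive integers: $q_n = n!$ clearly is, and for each $i \in \{0,1,\ldots,n\}$ we have $i! \mid n!$, so each summand $n!/i!$ in $p_n$ is a positive integer. Next I would write
$$q_n e - p_n \;=\; n!\sum_{i=0}^\infty \frac{1}{i!} - \sum_{i=0}^n \frac{n!}{i!} \;=\; \sum_{i=n+1}^\infty \frac{n!}{i!},$$
where the rearrangement is legitimate because the series defining $e$ converges absolutely. Every term of the remaining tail is strictly positive, so $q_n e - p_n > 0$, and in particular $|q_n e - p_n| \neq 0$.

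To finish, I would estimate the tail by the trick of factoring out $\frac{1}{n+1}$ and comparing with a geometric series. For $i \geq n+1$,
$$\frac{n!}{i!} \;=\; \frac{1}{(n+1)(n+2)\cdots i} \;\leq\; \frac{1}{(n+1)^{\,i-n}}.$$
Summing over $i \geq n+1$ gives
$$0 \;<\; q_n e - p_n \;\leq\; \sum_{j=1}^\infty \frac{1}{(n+1)^j} \;=\; \frac{1/(n+1)}{1 - 1/(n+1)} \;=\; \frac{1}{n},$$
so $|q_n e - p_n| \to 0$ as $n \to \infty$, establishing the nice rational approximation.

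No real obstacle arises; the only point that deserves a line of care is the absolute convergence justifying the termwise subtraction that produces the tail, and the fact that each $n!/i!$ with $i \leq n$ is an integer. Both are routine.
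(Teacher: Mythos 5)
Your proof is correct and follows essentially the same route as the paper: you identify $q_n e - p_n$ as the tail $\sum_{i=n+1}^\infty n!/i!$, observe positivity term by term, and bound the tail by the geometric series $\sum_{j\geq 1}(n+1)^{-j}=1/n$. The only cosmetic difference is that you explicitly flag the integrality of $n!/i!$ and the absolute convergence justifying the termwise subtraction, which the paper leaves implicit.
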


\begin{proof}
 We know that
$$e = \lim_{n \to \infty}{\sum_{k=0}^{n}{\frac{1}{k!}}} = 1 + \frac{1}{1!} + \frac{1}{2!} + \dots$$
With $p_n$ and  $q_{n}$ as in the statement of the theorem, we have $$\frac{p_{n}}{q_{n}} = \sum_{k=0}^{n}{\frac{1}{k!}}.$$
So, 
$$e = 1 + \frac{1}{1!} + \frac{1}{2!} + \cdots = \frac{p_{n}}{q_{n}} + \sum_{k=n+1}^{\infty}{\frac{1}{k!}}$$
This implies that
\begin{align*}
q_{n}e - p_{n} & = q_{n}\sum_{k=n+1}^{\infty}{\frac{1}{k!}} \\
& = \sum_{k=n+1}^{\infty}{ \frac{n!}{k!}}\\
& = \frac{1}{n+1} + \frac{1}{(n+1)(n+2)} + \cdots \\
&  < \frac{1}{n+1} + \frac{1}{(n+1)^2} + \dots = \frac{1}{n}. \end{align*}
Thus $q_{n}e - p_{n} \to 0$ as $n \to \infty$ and also clearly $q_{n}e - p_{n} > \frac{1}{n+1} > 0$, for all $n \in \mathbb{N}$.
Thus $\frac{p_{n}}{q_{n}} \xrightarrow{*} e$ as $n \to \infty$ for the given sequence of integers $\{p_{n}\}_{n \geq 1}$ and $\{q_{n}\}_{n \geq 1}$.
\end{proof}

Nice rational approximations do not behave very nicely under multiplication. Suppose $\frac{p_{n}}{q_{n}} \xrightarrow{*} \alpha$ and $\frac{p'_{n}}{q'_{n}} \xrightarrow{*} \beta$. Then it is not necessary that $\frac{p_{n}p'_{n}}{q_{n}q'_{n}} \xrightarrow{*} \alpha\beta$ simply because product of irrational numbers need not be irrational. Of course, $\left\{\frac{p_{n}p'_{n}}{q_{n}q'_{n}}\right\}_{ n\geq1}$ always converges to $\alpha\beta$ by the multiplicative property of limits. 

The answer to even the simpler question of  whether $\frac{p_{n}^{2}}{q_{n}^{2}} \xrightarrow{*} \alpha^{2}$ when $\frac{p_{n}}{q_{n}} \xrightarrow{*} \alpha$   is in the negative because square of an irrational number need not be irrational. However, is the answer in the affirmative when $\alpha^2$ is irrational? If we look closely at the nicely approximating sequence $\frac{p_{n}}{q_{n}} \xrightarrow{*} \alpha$, we see that  
$$q_{n}^{2}\alpha^{2}-p_{n}^{2} = (q_{n}\alpha+p_{n})(q_{n}\alpha-p_{n})$$
and $q_{n}\alpha + p_{n}$ can diverge as $n \to \infty$ and dominate $q_{n}\alpha-p_{n}$. Here is a concrete example. 

\begin{exmp} \label{exmp 1}
In Theorem \ref{thm 3.1.1}, we saw that if $p_{n} = \sum_{i=0}^{n}\frac{n!}{i!} $ and $q_{n} = n!$ then $ \frac{p_{n}}{q_{n}} \xrightarrow{*} e$. Note that $q_{n} e \geq n!$ and $p_{n} \geq 0$. So, $q_{n}e+p_{n}\geq n!$. Now, 
\begin{align*}q_{n}^{2} e^{2} - p_{n}^{2} & =  (q_{n}e+p_{n})(q_{n}e-p_{n})\\
& \geq  n!\left(\frac{1}{(n+1)}+\frac{1}{(n+1)(n+2)} + \cdots\right)\\
& \geq  \frac{n!}{n+1} \to \infty \ \textrm{as} \ n \to \infty.\\ \end{align*}
Thus $\left\{\frac{p_{n}^{2}}{q_{n}^{2}}\right\}_{n\geq1} $ is not a nice rational approximation for $e^2$. 
\end{exmp}

\subsection{Irrationality of $e^{2}$}
Naturally one can ask about higher powers of $e$. For now, let us try to find nice rational approximations for $e^2$. We have already seen in Example \ref{exmp 1} that a possible approach will not work.

\begin{prop} \label{prop 4.3}
Let $p_{n}= \sum_{i=0}^{n} (-1)^{i}\frac{n!}{i!} = n!-\frac{n!}{1!}+ \cdots (-1)^{n}\frac{n!}{n!} $ and $q_{n}= n!$. Then $\left\{\frac{p_{n}}{q_{n}}\right\}_{n\geq1}$ is a nice rational approximation of $\frac{1}{e}$.
\end{prop}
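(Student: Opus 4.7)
The plan is to mirror the proof of Theorem \ref{thm 3.1.1} but applied to the reciprocal series $\frac{1}{e} = e^{-1} = \sum_{n=0}^{\infty} \frac{(-1)^n}{n!}$, which converges (absolutely, by comparison with the series for $e$). First I would verify that $\frac{p_n}{q_n} = \sum_{i=0}^{n} \frac{(-1)^i}{i!}$ is exactly the $n$-th partial sum of this series, so that $\frac{p_n}{q_n} \to \frac{1}{e}$.

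Next I would estimate the tail after multiplication by $q_n = n!$. Writing
\[
\frac{q_n}{e} - p_n \;=\; n! \sum_{k=n+1}^{\infty} \frac{(-1)^k}{k!} \;=\; \sum_{k=n+1}^{\infty} \frac{(-1)^k\, n!}{k!},
\]
the right-hand side is an alternating series whose terms $\frac{n!}{k!}$ are strictly decreasing in $k$ and tend to $0$. By the Leibniz alternating-series estimate, the absolute value of this sum is at most the absolute value of its first term, namely $\frac{1}{n+1}$. Hence $\left|\tfrac{q_n}{e} - p_n\right| < \frac{1}{n+1} \to 0$ as $n \to \infty$.

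For the nice-approximation condition, I still need strict non-vanishing. The same alternating-series argument shows that the tail sum lies strictly between $0$ and its first term in absolute value (a standard consequence of the Leibniz test when the magnitudes are strictly monotone), so $\tfrac{q_n}{e} - p_n \neq 0$ for every $n$. Combined with the bound above, this gives
\[
0 \neq \left|q_n \cdot \tfrac{1}{e} - p_n\right| \to 0,
\]
which is the definition of $\frac{p_n}{q_n} \xrightarrow{*} \frac{1}{e}$.

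The only non-routine step is the strict non-vanishing; all the analytic content is the alternating-series estimate, and the integrality of $p_n$ and $q_n$ is immediate from their definitions. No other obstacle is expected: the approximation just comes from truncating the well-known series for $e^{-1}$ and clearing denominators.
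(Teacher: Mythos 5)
Your proof is correct and follows the same overall strategy as the paper: identify $\frac{p_n}{q_n}$ as the $n$-th partial sum of $\sum_{k\ge 0}\frac{(-1)^k}{k!}$, multiply the tail by $n!$, and bound it. The one place you diverge is in estimating the tail: the paper applies the triangle inequality to the absolute values $\frac{n!}{k!}$ and then compares with the geometric series to get $\left|q_n\tfrac{1}{e}-p_n\right|<\frac{1}{n}$, exactly as in the earlier proof for $e$; you instead invoke the Leibniz alternating-series estimate to get the marginally tighter bound $\frac{1}{n+1}$. Either suffices for convergence to $0$. Where your route genuinely improves on the paper's exposition is the non-vanishing step: the paper dismisses $\left|q_n\tfrac{1}{e}-p_n\right|>0$ with ``it is easy to check,'' whereas the alternating-series argument you use delivers the strict inequalities $0<\left|q_n\tfrac{1}{e}-p_n\right|<\frac{1}{n+1}$ in one stroke, because for a strictly decreasing sequence of positive terms the alternating tail is strictly between $0$ and its first term. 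So your proof is both correct and slightly more self-contained than the one in the paper.
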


\begin{proof}
By definition
$$\frac{1}{e}= \sum_{k=0}^{\infty}\frac{(-1)^{k}}{k!}.$$
Now $$\frac{p_{n}}{q_{n}}=\sum_{k=0}^{n}\frac{
(-1)^k}{k!}.$$
Thus
$$\frac{1}{e}=\frac{p_{n}}{q_{n}}+\sum_{k=n+1}^{\infty}\frac{(-1)^{k}}{k!}$$
implying that
$$\left|q_{n}\frac{1}{e}-p_{n}\right|= \left| \sum_{k=n+1}^{\infty}\frac{(-1)^{k}n!}{k!} \right| < \sum_{k=n+1}^{\infty}\frac{n!}{k!} < \frac{1}{n}.$$
Thus $\left|q_{n}\frac{1}{e}-p_{n}\right| \to 0$ as $n \to \infty$. It is easy to check that $\left|q_{n}\frac{1}{e}-p_{n}\right| > 0$. 
\end{proof}

\begin{prop} \label{prop 1.3.1}
Let $\alpha \in \mathbb{R}$. If $\frac{p_{n}}{q_{n}} \xrightarrow{*} \alpha$, then $\frac{q_{n}}{p_{n}} \xrightarrow{*} \frac{1}{\alpha}.$
\end{prop}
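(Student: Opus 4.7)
My plan is to prove this directly from the definition, which gives an almost immediate algebraic identity, after handling one small well-definedness issue.

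First, I would note that by the Approximation Theorem, the hypothesis $\frac{p_n}{q_n} \xrightarrow{*} \alpha$ forces $\alpha$ to be irrational, and in particular $\alpha \neq 0$. This is what makes $\frac{1}{\alpha}$ meaningful in the conclusion. I would also observe that for $\frac{p_n}{q_n}$ to be defined, the $q_n$ are nonzero integers, so $|q_n| \geq 1$ and hence $|q_n \alpha| \geq |\alpha| > 0$. Combined with $|q_n \alpha - p_n| \to 0$, this means that $p_n = 0$ can hold for only finitely many $n$: otherwise we would have $|q_n \alpha - p_n| = |q_n \alpha| \geq |\alpha|$ infinitely often, contradicting convergence to $0$. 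Discarding those finitely many indices, we may assume $p_n \neq 0$ for every $n$, so $\frac{q_n}{p_n}$ is well-defined throughout.

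Next I would carry out the key computation. The whole proof rests on the identity
\[
\left| p_n \cdot \frac{1}{\alpha} - q_n \right| \;=\; \frac{|p_n - q_n \alpha|}{|\alpha|} \;=\; \frac{|q_n \alpha - p_n|}{|\alpha|}.
\]
Since $\alpha \neq 0$ the factor $\frac{1}{|\alpha|}$ is a fixed positive constant, so the assumption $|q_n \alpha - p_n| \to 0$ immediately yields $\left| p_n \cdot \frac{1}{\alpha} - q_n \right| \to 0$.

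Finally I would verify strict positivity: the identity above shows $\left| p_n \cdot \frac{1}{\alpha} - q_n \right| > 0$ if and only if $|q_n \alpha - p_n| > 0$, which is part of the hypothesis $\frac{p_n}{q_n} \xrightarrow{*} \alpha$. Thus $0 < \left| p_n \cdot \frac{1}{\alpha} - q_n \right| \to 0$, which is exactly the statement that $\frac{q_n}{p_n} \xrightarrow{*} \frac{1}{\alpha}$.

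There is no real obstacle here; the only subtle point is guaranteeing $p_n \neq 0$ so that the sequence $\frac{q_n}{p_n}$ even makes sense, and as noted above this follows from the integrality of $q_n$ together with $\alpha \neq 0$. Everything else is a one-line algebraic manipulation.
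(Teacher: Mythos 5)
Your proof is correct and uses essentially the same argument as the paper: note $\alpha$ is irrational (hence nonzero) by the Approximation Theorem, then divide $|q_n\alpha - p_n|$ by $|\alpha|$ to get $\bigl|p_n\frac{1}{\alpha} - q_n\bigr| \to 0$ with strict positivity preserved. Your extra observation that $p_n \neq 0$ for all but finitely many $n$ (so that $\frac{q_n}{p_n}$ is well-defined) is a careful point the paper glosses over, and is a welcome addition.
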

\begin{proof}
As $\frac{p_{n}}{q_{n}} \xrightarrow{*} \alpha$, we have $\alpha$ to be irrational. Thus  $\alpha \neq 0$. Now as $|q_{n}\alpha-p_{n}| \to 0$ as $n \to \infty$, we have 
$$0 < \frac{1}{\alpha}\left|q_{n}\alpha-p_{n}\right| = \left |p_{n}\frac{1}{\alpha}-q_{n} \right| \to 0 \ \textrm{as} \ n \to \infty.$$
Thus $\frac{q_{n}}{p_{n}} \xrightarrow{*} \frac{1}{\alpha}$.
\end{proof}

We now have another rational approximation of $e$.

\begin{cor} \label{NewApproxFore}
Let $p_{n}= n!$ and $q_{n}= \sum_{i=0}^{n} (-1)^{i}\frac{n!}{i!} = n!-\frac{n!}{1!}+ \cdots (-1)^{n}\frac{n!}{n!} $. Then $\left\{\frac{p_{n}}{q_{n}}\right\}_{n\geq1}$ is a nice rational approximation of $e$.
\end{cor}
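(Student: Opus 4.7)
The plan is to obtain this corollary as an immediate consequence of the two results that immediately precede it, with essentially no new computation required. First I would observe that Proposition \ref{prop 4.3} has already established that with $p'_n = \sum_{i=0}^{n}(-1)^i \frac{n!}{i!}$ and $q'_n = n!$, the sequence $\left\{\frac{p'_n}{q'_n}\right\}_{n \geq 1}$ is a nice rational approximation of $\frac{1}{e}$, i.e.\ $\frac{p'_n}{q'_n} \xrightarrow{*} \frac{1}{e}$.

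Next, I would invoke Proposition \ref{prop 1.3.1}, the reciprocal principle for nice rational approximations, applied to $\alpha = \frac{1}{e}$. This principle yields immediately that $\frac{q'_n}{p'_n} \xrightarrow{*} \frac{1}{1/e} = e$. But $\frac{q'_n}{p'_n}$ is exactly $\frac{n!}{\sum_{i=0}^{n}(-1)^i \frac{n!}{i!}} = \frac{p_n}{q_n}$ in the notation of the corollary, so the conclusion follows.

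The only small point to verify is that the hypothesis of Proposition \ref{prop 1.3.1} is not obstructed by any zeros in the denominator. Since $\frac{1}{e} \neq 0$ is irrational (guaranteed by the very existence of a nice rational approximation), the proposition applies. Furthermore, the numerators $q_n = \sum_{i=0}^{n}(-1)^i \frac{n!}{i!}$ that appear as denominators in $\frac{p_n}{q_n}$ are nonzero integers for all $n \geq 1$, because they are close to $\frac{n!}{e}$ (by Proposition \ref{prop 4.3} the difference from $\frac{n!}{e}$ is less than $\frac{1}{n} < 1$), and $\frac{n!}{e}$ is not an integer once $n \geq 1$. There is no real obstacle here; the entire content of the corollary is that $\frac{1}{e}$ being irrational forces $e$ to be irrational too, and Proposition \ref{prop 1.3.1} has packaged that implication together with the explicit reciprocal approximation.
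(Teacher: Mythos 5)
Your proposal is correct and follows exactly the paper's own route: invoke Proposition \ref{prop 4.3} to get the nice approximation of $\tfrac{1}{e}$ and then apply Proposition \ref{prop 1.3.1} to take reciprocals. Your extra remark verifying that the denominators $q_n$ are nonzero integers is a small but welcome addition that the paper leaves implicit.
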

\begin{proof}
By Proposition \ref{prop 1.3.1}, we know that if $\frac{p_{n}}{q_{n}} \xrightarrow{*} \alpha$, then $\frac{q_{n}}{p_{n}} \xrightarrow{*} \frac{1}{\alpha}$. By Proposition \ref{prop 4.3}, we know that
$$\frac{\sum_{i=0}^{\infty}(-1)^{i}\frac{n!}{i!}}{n!} \xrightarrow{*} \frac{1}{e}.$$
Thus combining the two we get
$$\frac{n!}{\sum_{i=0}^{\infty}(-1)^{i}\frac{n!}{i!}} \xrightarrow{*} e.$$ Thus for the given $p_{n}$ and $q_{n}$ 
$\left\{\frac{p_{n}}{q_{n}}\right\}_{n\geq1}$ is a nice rational approximation of $e$.
\end{proof}

We shall use this new nice rational approximation of $e$ to prove irrationality of $e^2$ by virtue of the following proposition.

\begin{prop} \label{alpha beta}
Let $\alpha, \beta \in \mathbb{R}$, $\frac{p_{n}}{q_{n}} \xrightarrow{*} \alpha$ and $\frac{q_{n}}{q'_{n}} \xrightarrow{*} \beta$. Then 
$$q'_{n} \alpha\beta -  p_{n}  \to 0  \text{ as } n \to \infty.$$ 
In other words if $q'_{n} \alpha\beta -  p_{n} \neq 0$, for every $n$ in $\mathbb N$, then $\frac{p_{n}}{q'_{n}} \xrightarrow{*} \alpha\beta$.
\end{prop}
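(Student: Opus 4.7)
The plan is to split the quantity $q'_n\alpha\beta - p_n$ into two pieces, each of which can be controlled by one of the two hypotheses, and then apply the triangle inequality.

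First, I would unpack the two hypotheses into the concrete estimates they provide. The assumption $\frac{p_n}{q_n}\xrightarrow{*}\alpha$ gives $|q_n\alpha - p_n|\to 0$, and the assumption $\frac{q_n}{q'_n}\xrightarrow{*}\beta$ gives $|q'_n\beta - q_n|\to 0$. Both are nonzero for every $n$, but only the convergence to zero matters for this part.

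The algebraic identity I would exploit is
\begin{equation*}
q'_n\alpha\beta - p_n \;=\; \alpha\bigl(q'_n\beta - q_n\bigr) \;+\; \bigl(q_n\alpha - p_n\bigr).
\end{equation*}
Taking absolute values and applying the triangle inequality,
\begin{equation*}
\bigl|q'_n\alpha\beta - p_n\bigr| \;\leq\; |\alpha|\,\bigl|q'_n\beta - q_n\bigr| \;+\; \bigl|q_n\alpha - p_n\bigr|.
\end{equation*}
Since $\alpha$ is a fixed real number, the first term tends to $0$ by the second hypothesis, and the second term tends to $0$ by the first hypothesis. Hence $q'_n\alpha\beta - p_n\to 0$, as required.

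For the second assertion, once convergence to zero is established, the additional standing assumption $q'_n\alpha\beta - p_n\neq 0$ for every $n$ upgrades this to $|q'_n\alpha\beta - p_n|\to 0$ through positive values, which is exactly the statement $\frac{p_n}{q'_n}\xrightarrow{*}\alpha\beta$ (noting that $q'_n\neq 0$ since $\frac{q_n}{q'_n}$ is a well-defined rational). There is essentially no obstacle here; the only subtlety worth mentioning is that one genuinely needs the hypothesis $q'_n\alpha\beta - p_n\neq 0$ as a separate condition, because the product operation can introduce cancellations that destroy strict positivity even when each of the two approximating processes has it.
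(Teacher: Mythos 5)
Your proof is correct and uses the same decomposition as the paper: $q'_n\alpha\beta - p_n = \alpha(q'_n\beta - q_n) + (q_n\alpha - p_n)$, followed by the triangle inequality. The brief remark at the end about why the nonvanishing hypothesis must be imposed separately is a nice addition that the paper leaves implicit.
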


\begin{proof}

We know that $\left|q_{n} \alpha - p_{n}\right| \to 0$ and $\left|q'_{n} \beta - q_{n}\right| \to 0$ as $n \to \infty$. Now,
\begin{align*}\left|q'_{n}\alpha\beta - p_{n}\right| & = \left|q'_{n}\alpha\beta -q_{n} \alpha + q_{n} \alpha - p_{n}\right| \\ 
& = \left|\alpha \left(q'_{n}\beta-q_{n} \right) + \left(q_{n}\alpha - p_{n}\right)\right| \\
& \leq |\alpha||q'_{n}\beta -q_{n}| + |q_{n}\alpha -p_{n}| \to 0 \ \textrm{as} \ n \to \infty.\end{align*}

\end{proof}

 This result can be generalized a bit further. We shall not need it in the sequel, but record it for its own merit. 

\begin{prop}
Let $\alpha, \beta \in \mathbb{R}$, $\frac{p_{n}}{q_{n}} \xrightarrow{*} \alpha$ , $\frac{p'_{n}}{q'_{n}} \xrightarrow{*} \beta$. Also assume $p'_{n}|q_{n}$, for all $n \in \mathbb{N}$ (say $q_{n}=p'_{n}d_{n}$) and there exists $M \in \mathbb{R}$  such that $\left|d_{n}\right| \leq M$, for all $n \in \mathbb{N}$ ($\{d_{n}\}_{n\geq1}$ is a bounded sequence). Then $\left|d_{n}q'_{n} \alpha\beta -  p_{n}\right| \to 0$ as $n \to \infty$. In other words if $\left|d_{n}q'_{n} \alpha\beta -  p_{n}\right| \neq 0$, then $\frac{p_{n}}{d_{n}q'_{n}} \xrightarrow{*} \alpha\beta$.
\end{prop}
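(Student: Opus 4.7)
The plan is to mimic the proof of the previous proposition (Proposition \ref{alpha beta}) with the extra wrinkle that we use the boundedness hypothesis on $\{d_n\}$ to absorb an $|\alpha|$ factor that would otherwise blow up. The key algebraic identity is to insert $\pm d_n p'_n \alpha$ in the expression $d_n q'_n \alpha\beta - p_n$, since by assumption $q_n = p'_n d_n$, so $d_n p'_n \alpha = q_n \alpha$ lines up exactly with one of the two ``good'' quantities we control.

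Concretely, I would write
\[
d_n q'_n \alpha\beta - p_n = d_n\alpha\bigl(q'_n\beta - p'_n\bigr) + \bigl(q_n\alpha - p_n\bigr),
\]
using $d_n p'_n \alpha = q_n \alpha$ to pass from the middle to the right. Taking absolute values and applying the triangle inequality gives
\[
\bigl|d_n q'_n \alpha\beta - p_n\bigr| \;\leq\; |d_n|\,|\alpha|\,|q'_n\beta - p'_n| + |q_n\alpha - p_n| \;\leq\; M|\alpha|\,|q'_n\beta - p'_n| + |q_n\alpha - p_n|.
\]
Now $|q'_n\beta - p'_n| \to 0$ because $\tfrac{p'_n}{q'_n}\xrightarrow{*}\beta$, and $|q_n\alpha - p_n| \to 0$ because $\tfrac{p_n}{q_n}\xrightarrow{*}\alpha$, so the right-hand side tends to $0$. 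This gives the first claim $|d_n q'_n\alpha\beta - p_n| \to 0$. The ``in other words'' conclusion is then immediate: if additionally $d_n q'_n\alpha\beta - p_n \neq 0$ for every $n$, then by the very definition of nice rational approximation applied to the rationals $\tfrac{p_n}{d_n q'_n}$ (note $d_n q'_n$ is an integer since both $d_n$ and $q'_n$ are), we have $\tfrac{p_n}{d_n q'_n}\xrightarrow{*}\alpha\beta$.

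There is no real obstacle here; the only subtle point is recognizing that the boundedness hypothesis on $\{d_n\}$ is precisely what is needed to control the term $|d_n\alpha(q'_n\beta - p'_n)|$, because without it the factor $d_n$ could in principle grow faster than $|q'_n\beta - p'_n|$ shrinks. The divisibility hypothesis $p'_n \mid q_n$ is what makes the algebraic identity $d_n p'_n\alpha = q_n\alpha$ available, letting us reduce to the two nice-approximation hypotheses already in hand. This is exactly the generalization of Proposition \ref{alpha beta} that the authors advertise, with that earlier statement being recovered when $p'_n = q_n$ (so $d_n \equiv 1$).
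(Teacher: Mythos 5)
Your argument is essentially identical to the paper's: you insert $\pm d_n p'_n\alpha = \pm q_n\alpha$, apply the triangle inequality, and use the boundedness of $\{d_n\}$ to control the $|\alpha|\,|q'_n\beta-p'_n|$ term, exactly as the authors do. The observation that the earlier proposition is the special case $p'_n = q_n$, $d_n \equiv 1$ is a nice addition but does not change the substance.
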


\begin{proof}
We have $\left|p'_{n}d_{n}\alpha-p_{n}\right| \to 0$ and $\left|q'_{n}\beta - p'_{n}\right| \to 0$ as $n \to \infty$. Now
$$\left|q'_{n}d_{n}\alpha\beta - p_{n}\right|= \left|q'_{n}d_{n}\alpha\beta - p'_{n}d_{n}\alpha + q_{n}\alpha - p_{n}\right|$$
$$=\left|d_{n}\alpha \left(q'_{n}\beta - p'_{n} \right) + \left(q_{n}\alpha-p_{n}\right) \right| \leq M|\alpha| \left|q'_{n}\beta - p'_{n}\right| + \left|q_{n}\alpha-p_{n}\right| \to 0 \ \textrm{as} \ n \to \infty.$$
\end{proof}

We are ready to find a nice rational approximation for $e^2$.

\begin{thm}
If 
$$p'_{n}= \sum_{i=0}^{2n} \frac{(2n)!}{i!} = (2n)!+\frac{(2n)!}{1!}+\cdots + \frac{(2n)!}{(2n)!} $$ 
and 
$$q'_{n}= \sum_{i=0}^{2n} (-1)^{i}\frac{(2n)!}{i!} = (2n)!-\frac{(2n)!}{1!}+ \cdots + (-1)^{2n}\frac{(2n)!}{(2n)!}, $$ 
then $\left\{\frac{p'_{n}}{q'_{n}}\right\}_{n\geq1}$ is a nice rational approximation of $e^2$.
\end{thm}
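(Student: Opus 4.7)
The plan is to recognize $\{p'_n/q'_n\}$ as the ``chain'' of two nice approximations of $e$ and apply Proposition \ref{alpha beta} with $\alpha = \beta = e$. Applying Theorem \ref{thm 3.1.1} with $n$ replaced by $2n$, one reads off that $p'_n$ and $(2n)!$ are exactly the $p$ and $q$ of that theorem, so $p'_n/(2n)! \xrightarrow{*} e$. Applying Corollary \ref{NewApproxFore} with $n$ replaced by $2n$, one reads off that $(2n)!$ and $q'_n$ are exactly the $p$ and $q$ of that corollary, so $(2n)!/q'_n \xrightarrow{*} e$. The integer $(2n)!$ plays the role of the ``middle'' term in both, matching the hypotheses of Proposition \ref{alpha beta} with $p_n = p'_n$, $q_n = (2n)!$, and the same outer $q'_n$. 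The proposition then yields
$$q'_n e^2 - p'_n \to 0 \text{ as } n \to \infty.$$

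What remains is to verify that $q'_n e^2 - p'_n \neq 0$ for every $n$, which cannot be argued by invoking irrationality of $e^2$ since that is the very consequence we are after. I would handle this by expressing the quantity directly through the two tail remainders already computed in the earlier proofs. From the proof of Theorem \ref{thm 3.1.1}, $(2n)! e - p'_n = S_n$ with $S_n = \sum_{i=2n+1}^{\infty} (2n)!/i! > 0$; from the proof of Proposition \ref{prop 4.3}, $(2n)!/e - q'_n = R_n$ with $R_n = \sum_{i=2n+1}^{\infty} (-1)^i (2n)!/i!$. A short rearrangement gives
$$q'_n e^2 - p'_n \;=\; S_n - e^2 R_n.$$
The alternating series defining $R_n$ starts with $-1/(2n+1)$ and has terms strictly decreasing in absolute value, so $R_n < 0$; hence both summands on the right are strictly positive, and the quantity is strictly positive for every $n$.

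The main obstacle, such as it is, is conceptual rather than computational: one must spot the coincidence that $(2n)!$ appears as the \emph{denominator} in the first approximation of $e$ and as the \emph{numerator} in the second, so that the two approximations telescope through Proposition \ref{alpha beta}. Once this is noticed, the convergence is essentially free, and the only technical step is the elementary sign check above for non-vanishing.
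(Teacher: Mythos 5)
Your proposal is correct and takes essentially the same approach as the paper: both apply Proposition~\ref{alpha beta} with $\alpha=\beta=e$ through the middle term $(2n)!$, and both settle the non-vanishing issue by observing that the two tail remainders have definite opposite signs (the paper phrases this as $e>p'_n/(2n)!$ and $1/e<q'_n/(2n)!$ for even index, which is exactly your $S_n>0$, $R_n<0$). Your rearrangement $q'_n e^2 - p'_n = S_n - e^2 R_n$ is a slightly cleaner way to finish the sign check, but the underlying argument is the same.
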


\begin{proof}
Define integer sequences 
$$p_{n}= \sum_{i=0}^{n} \frac{n!}{i!} = n!+\frac{n!}{1!}+\cdots + \frac{n!}{n!},$$
$$q_{n}= \sum_{i=0}^{n} (-1)^{i}\frac{n!}{i!} = n!-\frac{n!}{1!}+ \cdots + (-1)^{n}\frac{n!}{n!}$$
and $r_{n}= n!$ for all $n \in \mathbb{N}$. It is clear from Theorem \ref{thm 3.1.1} and Corollary \ref{NewApproxFore}  that $\frac{p_{n}}{r_{n}} \xrightarrow{*} e$ and $\frac{r_{n}}{q_{n}} \xrightarrow{*} e$. Thus applying Proposition \ref{alpha beta}, we get
$$\left|q_{n}e^{2}-p_{n}\right|\to 0 \ \textrm{as} \ n \to \infty.$$ 
It is easy to check that $e>\frac{p_{n}}{n!}$.
Now, from Proposition \ref{prop 4.3}, $\frac{q_{n}}{n!} \xrightarrow{*} \frac{1}{e}$. Moreover, note that for even $n$,
$$\frac{1}{e}<\frac{q_{n}}{n!}.$$
So, if we take $p'_{n}=p_{2n}$ and $q'_{n}=q_{2n}$, then
$$\left|q'_{n}e^{2}-p'_{n}\right|\to 0 \ \textrm{as} \ n \to \infty $$
 and
$$ e^{2}> \frac{p'_{n}}{q'_{n}}, \textrm{ i.e., } \left|q'_{n}e^{2}-p'_{n}\right|>0.$$
Hence, $\left\{\frac{p'_{n}}{q'_{n}}\right\}_{n\geq1}$ is a nice rational approximation of $e^2$.
\end{proof}

\subsection{General higher powers}
What about even higher powers of $e$? It turns out that to continue the above approach to get nice rational approximations of $e^n$ for $n\geq 3$ is difficult. This leads us to \emph{Niven's polynomials}.

\begin{defn}
Let $n\in \mathbb{N}$. Niven's polynomial of degree $2n$ is the function $f_{n}:[0,1] \to \mathbb{R}$ given by
$$f_{n}(x)=\frac{x^{n}(1-x)^{n}}{n!}.$$
\end{defn}

Before proceeding further, let us note a few useful properties of Niven's polynomials. Sometimes, when there is no chance of confusion, we shall drop the suffix $n$ from $f_{n}$ and write it just as $f$.

\begin{prop}
Let $f:[0,1]\to \mathbb{R}$ (in our previous notation $f_{n}$) be the Niven's polynomial with degree $2n$. Then the following statements hold:
\begin{enumerate}
    \item $f(x)=f(1-x)$, for all $x \in [0,1].$
    \item $f(x) \in \left[0,\frac{1}{n!}\right)$, for all $x \in [0,1].$
    \item $f^{(j)}(0),f^{(j)}(1) \in \mathbb{Z}$, for all $j \in \mathbb{N}.$ ($f^{(j)}$ denotes the $j$th derivative of $f$)
\end{enumerate}
\end{prop}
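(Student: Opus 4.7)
The plan is to dispose of (1) and (2) with direct computation and then leverage (1) to reduce (3) to a statement at $0$.

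For (1), I would simply substitute: $f(1-x) = \frac{(1-x)^n(1-(1-x))^n}{n!} = \frac{(1-x)^n x^n}{n!} = f(x)$. This is one line and gives a symmetry that will be useful for (3).

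For (2), non-negativity is immediate since for $x \in [0,1]$ both $x^n$ and $(1-x)^n$ are non-negative. For the upper bound I would invoke the elementary calculus fact (AM-GM, or direct maximization of $x(1-x)$) that $x(1-x) \le \tfrac{1}{4}$ on $[0,1]$, with equality at $x=1/2$. Hence $x^n(1-x)^n \le 4^{-n}$, so $f(x) \le \frac{1}{4^n\, n!} < \frac{1}{n!}$ for all $n \ge 1$.

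For (3), the key step is to expand $f$ as an explicit polynomial. Using the binomial theorem,
\[
f(x) = \frac{1}{n!}\, x^n (1-x)^n = \frac{1}{n!} \sum_{k=0}^{n} \binom{n}{k}(-1)^k x^{\,n+k}.
\]
Since $f$ is a polynomial whose nonzero monomials have degrees between $n$ and $2n$, I will consider three cases for $f^{(j)}(0)$. If $j < n$ or $j > 2n$, every term of $f^{(j)}$ either vanishes at $0$ or is identically zero, so $f^{(j)}(0) = 0 \in \mathbb{Z}$. If $n \le j \le 2n$, only the term with $n+k = j$ contributes, giving
\[
f^{(j)}(0) = \frac{j!}{n!}\binom{n}{j-n}(-1)^{j-n},
\]
and $j!/n!$ is an integer because $j \ge n$, so $f^{(j)}(0) \in \mathbb{Z}$. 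Finally, to handle $f^{(j)}(1)$ without re-expanding around $x=1$, I would differentiate the identity from (1) a total of $j$ times via the chain rule: $f^{(j)}(x) = (-1)^j f^{(j)}(1-x)$. Setting $x=1$ yields $f^{(j)}(1) = (-1)^j f^{(j)}(0) \in \mathbb{Z}$.

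I do not expect any genuine obstacle here; the only mild subtlety is the bookkeeping in (3) to confirm that $f^{(j)}(0)$ always lands in $\mathbb{Z}$ in the middle range $n \le j \le 2n$, which hinges on the cancellation between the $n!$ in the denominator of $f$ and the $j!$ coming from differentiating $x^j$ exactly $j$ times. Using (1) to transfer the conclusion from $0$ to $1$ is the cleanest way to avoid redoing the binomial expansion.
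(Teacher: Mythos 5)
Your proof is correct and follows essentially the same approach as the paper's: direct substitution for (1), bounding the product $x^n(1-x)^n$ for (2), and binomial expansion plus the integrality of $j!/n!$ for (3), with the symmetry from (1) used to transfer the conclusion at $0$ to the conclusion at $1$. The only minor difference is in (2), where you derive the sharper bound $f(x) \le \frac{1}{4^n\,n!}$ via $x(1-x)\le\frac{1}{4}$, whereas the paper simply observes that $x^n$ and $(1-x)^n$ cannot both equal $1$ simultaneously.
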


\begin{proof}
1. This proof is straightforward.  $$f(x)=\frac{x^{n}(1-x)^{n}}{n!}=\frac{(1-x)^{n}x^{n}}{n!}=f(1-x).$$
2. Observe that both $x^{n}$ and $(1-x)^{n}$ cannot be $1$ at the same time. Thus, 
$$f(x)=\frac{x^{n}(1-x)^{n}}{n!} \in \left[0,\frac{1}{n!}\right).$$
3. As $f(x)=f(1-x)$ for all $x\in[0,1]$, proving $f^{(j)}(0) \in \mathbb{Z}$ should imply  $f^{(j)}(1) \in \mathbb{Z}$.
For brevity, we denote the binomial coefficients of $(1-x)^n$ by $a_{0},a_{1},\ldots,a_{n}$. So,
$$x^{n}(1-x)^{n}=x^{n}(a_{0}+a_{1}x+\dots+a_{n}x^{n}).$$
Note that the power of $x$ in each term of $f$ is at least $n$. Hence for  $j < n$, we have positive powers of $x$ in each term of  $f^{(j)}(x)$. For $j \geq n$, we have 
$$f^{(j)}(x)=\frac{(c_{j-n}+c_{j-n+1}x+\dots+c_{n}x^{2n-j})}{n!}$$
where $c_{k}=a_{k}(n+k)(n+k-1)\ldots(n+k-j+1)$ for $k=j-n,\ldots,2n$. Putting $x=0$ we get
 $$f^{(j)}(0)=\frac{c_{j-n}}{n!}=\frac{j! \textit{ } a_{j-n}}{n!}.$$
 Now, $n!$ divides $j!$ because $j \geq n$ and $a_{j-n}$ is an integer by assumption. Thus,
 $$f^{(j)}(0)\in \mathbb{Z}  \text{ for }  j \geq n.$$
\end{proof}

The nice rational approximation for a natural power of $e$ requires another function corresponding to the Niven's polynomial of degree $2n$. This is based on property (3) in the result above. Given $k\in \mathbb{N}$ and the Niven's polynomial $f_{n}$ of degree $2n$, we define a function $F_{n,k}:[0,1]\to \mathbb{R}$ as
$$F_{n,k}(x)=k^{2n}f_{n}(x)-k^{2n-1}f^{(1)}_{n}(x)+\cdots+f^{(2n)}_{n}(x)= \sum_{i=0}^{2n}(-k)^{2n-i}f_{n}^{(i)}(x).$$ From the properties of Niven's polynomial, it can be easily verified that $F_{n,k}(0)$ and $F_{n,k}(1)$ are integers.

\begin{thm} \label{thm 3.3.2}
For any $k\in \mathbb{N}$, we have $\left|F_{n,k}(1)e^{k}-F_{n,k}(0)\right| \to 0$ as $n \to \infty$. In other words $\frac{F_{n,k}(0)}{F_{n,k}(1)} \xrightarrow{*} e^{k}.$
\end{thm}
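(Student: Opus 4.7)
My plan is to convert the difference $F_{n,k}(1) e^k - F_{n,k}(0)$ into an integral that one can bound crudely by $1/n!$ times fixed constants, which is the standard trick that makes Niven's polynomial machinery work.

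The key observation is that $F_{n,k}$ has been engineered so that the product $e^{kx} F_{n,k}(x)$ has a very simple derivative. Explicitly, I would compute
\begin{align*}
\frac{d}{dx}\bigl[e^{kx} F_{n,k}(x)\bigr] &= e^{kx}\bigl(k F_{n,k}(x) + F_{n,k}'(x)\bigr)\\
&= e^{kx}\sum_{i=0}^{2n}\bigl[k(-k)^{2n-i} f_n^{(i)}(x) + (-k)^{2n-i} f_n^{(i+1)}(x)\bigr],
\end{align*}
and then reindex the second sum by $j=i+1$ (using $f_n^{(2n+1)} \equiv 0$ since $\deg f_n = 2n$). The coefficients of $f_n^{(i)}$ for $1 \le i \le 2n$ telescope to zero because $k(-k)^{2n-i} + (-k)^{2n-i+1} = 0$, leaving only the $i=0$ term. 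Using $(-k)^{2n}=k^{2n}$, this collapses to
\[
\frac{d}{dx}\bigl[e^{kx} F_{n,k}(x)\bigr] = k^{2n+1}\, e^{kx} f_n(x).
\]

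Integrating from $0$ to $1$ yields the exact identity
\[
F_{n,k}(1)\, e^{k} - F_{n,k}(0) = k^{2n+1}\int_{0}^{1} e^{kx} f_n(x)\, dx.
\]
Since $f_n(x) \in [0, 1/n!)$ and $f_n(x)>0$ on $(0,1)$ while $e^{kx} \le e^k$ on $[0,1]$, the right-hand side is strictly positive and bounded above by $e^k k^{2n+1}/n!$. Because $n!$ eventually dominates $k^{2n+1}$ for any fixed $k \in \mathbb{N}$, this bound tends to $0$, so $0 \neq |F_{n,k}(1) e^k - F_{n,k}(0)| \to 0$. Combined with the already-noted fact that $F_{n,k}(0), F_{n,k}(1) \in \mathbb{Z}$, this is exactly the statement $\frac{F_{n,k}(0)}{F_{n,k}(1)} \xrightarrow{*} e^k$.

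The only step that needs a little care is the telescoping coefficient computation; the rest is just a size bound. There is also one small thing to check, namely that $F_{n,k}(1) \neq 0$ for large $n$ so that $\frac{F_{n,k}(0)}{F_{n,k}(1)}$ is defined; this follows because if it vanished, the left side would be $-F_{n,k}(0)$, an integer, contradicting the fact that the right-hand side has absolute value less than $1$ for large $n$ while being nonzero.
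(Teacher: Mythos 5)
Your proof is correct and follows essentially the same route as the paper's: you differentiate $e^{kx}F_{n,k}(x)$, watch the telescoping cancellation reduce it to $k^{2n+1}e^{kx}f_n(x)$, integrate over $[0,1]$, and bound by $e^k k^{2n+1}/n!$. Your extra check that $F_{n,k}(1)\neq 0$ for large $n$ is a nice touch the paper leaves implicit, but otherwise the arguments coincide.
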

\begin{proof}
Observe that
$$F_{n,k}(x)=k^{2n}f_{n}(x)-k^{2n-1}f^{(1)}_{n}(x)+\cdots+f^{(2n)}_{n}(x).$$
Thus,
$$F'_{n,k}(x)=k^{2n}f^{(1)}_{n}(x)-k^{2n-1}f^{(2)}_{n}(x)+\cdots-kf^{(2n)}_{n}(x)=\sum_{i=1}^{2n}(-1)^{i+1}k^{2n-i+1}f_{n}^{(i)}(x).$$
Now, 
\begin{align*}\Bigl[e^{kx}F_{n,k}(x)\Bigr]' &= e^{kx}\left(kF_{n,k}(x)+F'_{n,k}(x)\right) \\
& =e^{kx}\left(\sum_{i=0}^{2n}(-1)^{i}k^{2n-i+1}f_{n}^{(i)}(x)+ \sum_{i=1}^{2n}(-1)^{i+1}k^{2n-i+1}f_{n}^{(i)}(x)\right)\\
& = e^{kx}k^{2n+1}f_{n}(x).\end{align*}
Hence,
$$\int_{0}^{1}e^{kx}k^{2n+1}f_{n}(x)dx=\Bigl[e^{kx}F_{n,k}(x)\Bigr]_{0}^{1}$$
implying that
$$F_{n,k}(1)e^{k}-F_{n,k}(0) = \int_{0}^{1}e^{kx}k^{2n+1}f_{n}(x)dx.$$

As $0 \le f_{n}(x)<\frac{1}{n!}$ and $0 < e^{kx} \le e^{k}$, for all $x \in [0,1]$, and the integrand is positive on $(0,1)$, we have
$$0<\int_{0}^{1}e^{kx}k^{2n+1}f_{n}(x)dx < \frac{e^{k}k^{2n+1}}{n!} \to 0 \ \textrm{as} \ n \to \infty.$$
(We used that $\frac{a^l}{l!} \to 0$ as $l \to \infty$ for any $a>0$.)  
Thus, $0< \left|F_{n,k}(1)e^{k}-F_{n,k}(0)\right|\to 0$ as $n \to \infty$. This gives us a nice rational approximation for $e^{k}$ and proves its irrationality. 
\end{proof}

\subsection{Rational powers}
We want to generalise further for $e^r$ where $0\neq r \in \mathbb{Q}$. Here again Niven's polynomial is helpful.  Given a non-zero rational number $r=\frac{p}{q}$ and Niven's polynomial $f_{n}$ of degree $2n$, the function that we need in this case is 
\begin{align*}
    F_{n,r}(x)  &= p^{2n}f_{n}(x)- p^{2n-1}qf^{(1)}_{n}(x)+ \cdots + q^{2n}f^{(2n)}_{n}(x) \\
    & = \sum_{i=0}^{2n}(-1)^{i}p^{2n-i}q^{i}f_{n}^{(i)}(x) .
\end{align*}
 We shall state the result without a proof because the proof is similar to that of Theorem \ref{thm 3.3.2}.

\begin{thm}\label{thm 3.4.1}
Given $e^r$ for some non-zero $\frac{p}{q}=r\in\mathbb{Q}$, we have  $\left|F_{n,r}(1)e^{r}-F_{n,r}(0)\right| \to 0$ as $n \to \infty$. In other words $\frac{F_{n,r}(0)}{F_{n,r}(1)} \xrightarrow{*} e^{r}.$
\end{thm}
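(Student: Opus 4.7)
The plan is to mirror the proof of Theorem \ref{thm 3.3.2} almost verbatim after clearing the denominator $q$. Writing $r=p/q$ and using the identity $p^{2n-i}q^{i}=q^{2n}r^{2n-i}$, I would first rewrite
$$F_{n,r}(x)=q^{2n}\sum_{i=0}^{2n}(-1)^{i}r^{2n-i}f_n^{(i)}(x)=q^{2n}\sum_{i=0}^{2n}(-r)^{2n-i}f_n^{(i)}(x),$$
where the second equality uses $(-1)^{2n-i}=(-1)^{i}$. Setting $\tilde F_{n,r}(x)=F_{n,r}(x)/q^{2n}$ then puts the auxiliary function into exactly the form of $F_{n,k}$ from Theorem \ref{thm 3.3.2}, with the positive integer $k$ replaced by the (possibly negative) rational $r$.

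Next I would rerun the telescoping product-rule computation unchanged: the pairwise cancellation in $r\tilde F_{n,r}(x)+\tilde F_{n,r}'(x)$ delivers
$$\bigl[e^{rx}\tilde F_{n,r}(x)\bigr]'=e^{rx}\,r^{2n+1}\,f_n(x),$$
and then integrating from $0$ to $1$ and multiplying through by $q^{2n}$ yields the clean identity
$$F_{n,r}(1)e^{r}-F_{n,r}(0)=\frac{p^{2n+1}}{q}\int_{0}^{1}e^{rx}f_n(x)\,dx,$$
since $q^{2n}r^{2n+1}=p^{2n+1}/q$.

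To finish, I would bound the right-hand side using property (2) of Niven's polynomial, $0\le f_n(x)<1/n!$, together with $|e^{rx}|\le e^{|r|}$ on $[0,1]$, which gives
$$\bigl|F_{n,r}(1)e^{r}-F_{n,r}(0)\bigr|\le\frac{|p|^{2n+1}e^{|r|}}{|q|\,n!}\longrightarrow 0$$
as $n\to\infty$, the factorial dominating the geometric term. Property (3) of Niven's polynomial makes $F_{n,r}(0),F_{n,r}(1)\in\mathbb{Z}$, and the strict positivity of $e^{rx}f_n(x)$ on $(0,1)$ keeps the integral, and hence the difference, nonzero; together these promote the convergence to a bona fide nice rational approximation $\tfrac{F_{n,r}(0)}{F_{n,r}(1)}\xrightarrow{*} e^{r}$.

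I expect no essential obstacle beyond what Theorem \ref{thm 3.3.2} has already handled. The only mild care needed is the sign-and-power bookkeeping when pulling $q^{2n}$ out of the defining sum to recover the template $\sum(-r)^{2n-i}f_n^{(i)}(x)$, plus the harmless observation that $|p|^{2n+1}/n!\to 0$ still holds for any fixed integer $p$.
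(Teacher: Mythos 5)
Your proof is correct and is exactly the adaptation of Theorem~\ref{thm 3.3.2} that the paper alludes to but omits (``the proof is similar''). The bookkeeping---factoring out $q^{2n}$ to recover the template $\sum(-r)^{2n-i}f_n^{(i)}$, verifying the telescoping still cancels for possibly negative $r$, recognizing $q^{2n}r^{2n+1}=p^{2n+1}/q$, and using positivity of $e^{rx}f_n(x)$ on $(0,1)$ to keep the difference nonzero---is all handled correctly.
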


From Theorem \ref{thm 3.4.1}, it is evident that $e^{r}$ is irrational for all non-zero $r \in \mathbb{Q}$. Let us see an application of this result to prove irrationality of a different number.

\begin{prop}
$ln(r)$ is irrational for all positive $ r \in \mathbb{Q} -\{1\}$.
\end{prop}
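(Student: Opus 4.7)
The plan is to argue by contradiction, reducing the statement to the irrationality of $e^r$ for non-zero rational $r$, which is already in hand from Theorem \ref{thm 3.4.1}.

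First I would suppose, for contradiction, that $\ln(r)$ is rational for some positive $r \in \mathbb{Q} - \{1\}$. Call this rational value $s$, so that $s = \ln(r)$ and hence $e^{s} = r$. Since $r \neq 1$, we have $\ln(r) \neq 0$, so $s$ is a \emph{non-zero} rational number. This is the only delicate point in the setup: we must rule out $s = 0$ to be allowed to invoke Theorem \ref{thm 3.4.1}, and the hypothesis $r \neq 1$ does exactly that.

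Next I would apply Theorem \ref{thm 3.4.1} to this non-zero rational $s$. That theorem produces an explicit nice rational approximation $\frac{F_{n,s}(0)}{F_{n,s}(1)} \xrightarrow{*} e^{s}$, and in particular it guarantees that $e^{s}$ is irrational. But by construction $e^{s} = r \in \mathbb{Q}$, which is a direct contradiction.

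Thus the assumption that $\ln(r)$ is rational is untenable, so $\ln(r)$ is irrational whenever $r$ is a positive rational different from $1$. There is no real obstacle here; the only thing worth flagging is the need to separate the case $r = 1$ at the very start, because $\ln(1) = 0$ is rational and Theorem \ref{thm 3.4.1} is silent on the exponent $0$ (indeed $e^{0} = 1$ is rational). Once that case is excluded, the result is an immediate consequence of the irrationality of $e^{r}$ for non-zero rational $r$.
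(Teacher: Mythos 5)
Your proposal is correct and follows exactly the same route as the paper: assume $\ln(r)$ is rational, note it is non-zero because $r \neq 1$, invoke Theorem \ref{thm 3.4.1} to conclude $e^{\ln(r)}$ is irrational, and contradict $e^{\ln(r)} = r \in \mathbb{Q}$. The only difference is that you spell out the $r=1$ edge case more explicitly, which is a welcome bit of care but not a new idea.
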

\begin{proof}
Assume that $ln(r) \in \mathbb{Q}$. Let $\alpha = ln(r)$. Then $\alpha \neq 0$ as $r \neq 1$. Thus, $e^\alpha$ is not rational. But, by definition, $e^\alpha = r \in \mathbb{Q}$. Thus contradiction. 
\end{proof}

\section{Trigonometric irrationals and related results}

\subsection{$\sin{\left(\frac{1}{m}\right)}$ and $\cos{\left(\frac{1}{m}\right)}$}
In this section, we shall find nice rational approximations for some trigonometric irrational numbers. 

\begin{thm}
\label{thm 5.1}
$\sin{\left(\frac{1}{m}\right)}$ and $\cos{\left(\frac{1}{m}\right)}$ are irrational for all $m \in \mathbb{N}$.
\end{thm}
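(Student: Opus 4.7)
The plan is to construct explicit nice rational approximations of both numbers directly from their Taylor series and then invoke the Approximation Theorem. Recall
$$\cos(1/m) \;=\; \sum_{k=0}^{\infty} \frac{(-1)^{k}}{(2k)!\,m^{2k}}, \qquad \sin(1/m) \;=\; \sum_{k=0}^{\infty} \frac{(-1)^{k}}{(2k+1)!\,m^{2k+1}}.$$
Both are alternating series with very rapidly decreasing terms, which is exactly the situation in which a partial sum multiplied by an appropriate factorial produces integers plus an exponentially small remainder.

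For $\cos(1/m)$, I would take
$$q_{n} \;=\; (2n)!\,m^{2n}, \qquad p_{n} \;=\; \sum_{k=0}^{n} (-1)^{k}\, \frac{(2n)!}{(2k)!}\, m^{2n-2k}.$$
Each summand of $p_{n}$ is an integer, because $(2k)! \mid (2n)!$ whenever $k \le n$ and the exponent $2n-2k$ is a nonnegative integer, so $p_{n} \in \mathbb{Z}$. Multiplying the full series by $q_{n}$ and subtracting $p_{n}$ leaves the tail
$$q_{n}\cos(1/m) - p_{n} \;=\; \sum_{k=n+1}^{\infty} \frac{(-1)^{k}\,(2n)!}{(2k)!\, m^{2k-2n}},$$
which is an alternating series whose consecutive terms shrink by the factor $\frac{1}{(2k+1)(2k+2)m^{2}} < 1$. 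By the Leibniz criterion the sum is nonzero and bounded in absolute value by its first term $\frac{1}{(2n+1)(2n+2)m^{2}}$, which tends to $0$. Hence $\{p_{n}/q_{n}\}$ is a nice rational approximation of $\cos(1/m)$, and by the Approximation Theorem $\cos(1/m)$ is irrational. The argument for $\sin(1/m)$ is identical after taking $q_{n} = (2n+1)!\,m^{2n+1}$ and $p_{n} = \sum_{k=0}^{n}(-1)^{k}\frac{(2n+1)!}{(2k+1)!}\,m^{2n-2k}$; the tail is then bounded by $\frac{1}{(2n+2)(2n+3)m^{2}}$.

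No step in this plan is a serious obstacle. The one detail that must be verified beyond the obvious size bound is the strict inequality $|q_{n}\alpha - p_{n}| > 0$, which is what makes the approximation \emph{nice} rather than merely rational; this is the standard consequence of the Leibniz test applied to an alternating series whose absolute values are strictly decreasing, a hypothesis which holds for every $m \in \mathbb{N}$ and every $n \geq 1$ in the two tails above.
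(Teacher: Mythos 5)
Your proof is correct and takes essentially the same route as the paper's: expand the Maclaurin series, multiply by a suitable factorial-times-power-of-$m$ so that the truncated sum becomes an integer $p_n$, and then estimate the alternating tail. Your normalization $q_n = (2n+1)!\,m^{2n+1}$ (resp. $(2n)!\,m^{2n}$) is a bit cleaner than the paper's $q_n = m^{4n-1}(4n-1)!$, and you dispatch both the size bound and the nonvanishing of the tail at once via the strict Leibniz criterion, whereas the paper bounds the tail by a geometric series and separately pairs consecutive terms to show positivity.
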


\begin{proof}
For the proof of this theorem, we shall use the following Maclaurin series for $\sin{(x)}$ and $\cos{(x)}$.
$$
\sin{(x)}=x-\frac{x^3}{3!}+\frac{x^5}{5!}-\frac{x^7}{7!}+ \cdots
=\sum_{k=0}^{\infty} \frac{(-1)^k x^{2k+1}}{(2k+1)!}
$$
and
$$
\cos{(x)}=1-\frac{x^2}{2!}+\frac{x^4}{4!}-\frac{x^6}{6!}+ \cdots
=\sum_{k=0}^{\infty} \frac{(-1)^k x^{2k}}{(2k)!}.
$$
So,
\begin{align*}
& m^{4n-1} \ (4n-1)! \ \sin{\left(\frac{1}{m}\right)} \\ 
= & m^{4n-1} \ (4n-1)! \ \sum_{k=0}^{\infty} \frac{(-1)^k} {m^{2k+1}(2k+1)!} \\
= & \sum_{k=0}^{2n-1} \left(\frac{(4n-1)!}{(2k+1)!}\right) \ (-1)^k \ m^{\left(4n-2k-2\right)} + \sum_{k=2n}^{\infty} \frac{(-1)^k \ m^{4n-1} \ (4n-1)!} {m^{2k+1}(2k+1)!}
\end{align*}
Taking $$p_n = \sum_{k=0}^{2n-1} \left(\frac{(4n-1)!}{(2k+1)!}\right) \ (-1)^k \  m^{\left(4n-2k-2\right)}$$ and $q_n=m^{4n-1} \ (4n-1)!$, we see that
\begin{align*}
\left|q_n \sin{\left(\frac{1}{m}\right)} - p_n\right| & = \left|\sum_{k=2n}^{\infty} \frac{(-1)^k \ m^{4n-1} \ (4n-1)!} {m^{2k+1}(2k+1)!}\right|
\\
& 
\le \sum_{k=2n}^{\infty} \frac{m^{4n-1} \ (4n-1)!}{m^{2k+1}(2k+1)!} \\
& =\frac{1}{m^2} \frac{1}{4n(4n+1)}+\frac{1}{m^4}\frac{1}{4n(4n+1)(4n+2)(4n+3)}+\cdots \\
& < \frac{1}{m^2} \frac{1}{(4n)^2}+\frac{1}{m^4}\frac{1}{(4n)^4}+....=\frac{1}{m^2 (4n)^2 -1}.
\end{align*}
As $n \to \infty $, we have $\frac{1}{m^2 (4n)^2 -1} \to 0 $.
Also, 
$$
q_n \sin{\left(\frac{1}{m}\right)} - p_n = \sum_{j=n}^{\infty} \left[m^{4n-1} (4n-1)! \left\{ \frac{1}{m^{4j+1} (4j+1)!} - \frac{1}{m^{4j+3} (4j+3)!}\right\}\right]
$$
is positive as each term of the summation is positive. So,
$$
0 \neq \left|q_n \sin{\left(\frac{1}{m}\right)} - p_n\right| \to 0
$$
as $ n \to \infty $.
So, $\left\{\frac{p_n}{q_n}\right\}_{n \in \mathbb{N}}$ is a nice rational approximation of $\sin{\left(\frac{1}{m}\right)}$. So, $\sin{\left(\frac{1}{m}\right)}$ is irrational. The proof of irrationality of $\cos{\left(\frac{1}{m}\right)}$ is analogous.
\end{proof}

An argument similar to that used in Theorem \ref{thm 5.1} can be used to prove that $\cos{\left(\frac{1}{\sqrt{m}}\right)}$ is irrational for all $m \in \mathbb{Z}$.

\subsection{A general result and irrationality of $\pi$ }

The main result of this section is as follows. 

\begin{thm}
\label{thm 5.3}
If $\alpha \in (0, \pi ]$ is a rational number, then at least one of $\sin{\alpha}$ and $\cos{\alpha}$ is irrational.
\end{thm}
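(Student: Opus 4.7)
The plan is to mimic Niven's classical proof of the irrationality of $\pi$, adapted to exclude the simultaneous rationality of $\sin\alpha$ and $\cos\alpha$. Since $\pi \notin \mathbb{Q}$ (a fact I can cite in this section), the hypothesis $\alpha \in \mathbb{Q} \cap (0,\pi]$ automatically forces $\alpha \in (0,\pi)$. I would argue by contradiction: assume $\alpha = p/q$ with $p,q \in \mathbb{N}$ and that both $\sin\alpha = A/D$ and $\cos\alpha = B/D$ are rational, written with a common positive integer denominator $D$.

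The key device is the Niven-type polynomial
$$f_n(x) = \frac{x^n(p - qx)^n}{n!} = \frac{q^n\, x^n(\alpha - x)^n}{n!},$$
which is symmetric about $\alpha/2$, i.e.\ $f_n(\alpha - x) = f_n(x)$. Setting $F_n(x) = \sum_{k=0}^{n}(-1)^k f_n^{(2k)}(x)$, a telescoping computation gives $F_n'' + F_n = f_n$, whence
$$\frac{d}{dx}\bigl[F_n'(x)\sin x - F_n(x)\cos x\bigr] = f_n(x)\sin x.$$
Integrating from $0$ to $\alpha$ and using $\sin 0 = 0$, $\cos 0 = 1$ yields
$$\int_0^\alpha f_n(x)\sin x\, dx = F_n'(\alpha)\sin\alpha - F_n(\alpha)\cos\alpha + F_n(0).$$
Multiplying by $D$ turns the right-hand side into $F_n'(\alpha)A - F_n(\alpha)B + D F_n(0)$, which, granted the integrality claim below, is an integer. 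On the left, $f_n(x) > 0$ and $\sin x > 0$ on $(0,\alpha) \subset (0,\pi)$, so the integral is strictly positive. The crude bound $x(\alpha - x) \le \alpha^2/4$ on $[0,\alpha]$ gives
$$0 < D\int_0^\alpha f_n(x)\sin x\, dx \le D\alpha\,\frac{(p\alpha/4)^n}{n!} \longrightarrow 0,$$
so for all sufficiently large $n$ this quantity is a positive integer strictly less than $1$, the desired contradiction.

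The step I expect to require the most care, and where the exact shape of $f_n$ is essential, is verifying that $F_n(0), F_n(\alpha), F_n'(\alpha) \in \mathbb{Z}$. This reduces to showing $f_n^{(j)}(0) \in \mathbb{Z}$ for every $j \ge 0$: the polynomial $n!\,f_n(x) = x^n(p - qx)^n$ has integer coefficients and lowest term of degree $n$, so $f_n^{(j)}(0) = 0$ for $j < n$, while for $j \ge n$ the factor $j!/n!$ that appears upon evaluating the derivative at $0$ absorbs the denominator. The symmetry $f_n(\alpha - x) = f_n(x)$ then transfers integrality from $x = 0$ to $x = \alpha$ (with an alternating sign that is irrelevant for membership in $\mathbb{Z}$). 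Once this integrality is in hand, everything else is a routine Niven-style endgame, and the positivity-plus-factorial-decay squeeze closes the argument.
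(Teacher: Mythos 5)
Your proof is correct, but it takes a genuinely different route from the one in the paper, so let me compare. The paper works with the Niven polynomial $f_n(x)=x^n(1-x)^n/n!$ on $[0,1]$ and a \emph{complex-valued} auxiliary function $F_n=\sum_{k}(ip)^{2n-k}q^k(-1)^k f_n^{(k)}$, integrates $e^{ipx/q}f_n$ over $[0,1]$, and then takes the real part of $F_n(1)e^{ip/q}-F_n(0)$ to obtain integer sequences $a_n,c_n,d_n$ with $0\neq |c_n\cos\alpha-d_n\sin\alpha-a_n|\to 0$; it then appeals to its Lemma~5.2, the bivariate extension of the nice-approximation criterion. You instead rescale the Niven polynomial to $[0,\alpha]$, use the purely real operator identity $F_n''+F_n=f_n$ and the antiderivative $F_n'\sin x - F_n\cos x$, and close with a direct ``positive integer strictly less than $1$'' contradiction in the classical Niven style. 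Both arguments are sound; the paper's version slots naturally into its running theme of nice rational approximations (it explicitly manufactures a sequence and cites the criterion), whereas yours is more self-contained and closer to Niven's original proof of the irrationality of $\pi$. The rescaling to $[0,\alpha]$ is what buys you the clean real identity; the paper avoids rescaling by moving to complex exponentials.

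One caution: you open by citing the irrationality of $\pi$ to reduce to $\alpha\in(0,\pi)$. In the paper's logical order this theorem is itself what is used to \emph{prove} $\pi$ irrational (the corollary immediately following), so invoking that fact here would be circular. Fortunately the invocation is also unnecessary: with $\alpha=p/q\le\pi$ you still have $\sin x>0$ and $f_n(x)>0$ on the open interval $(0,\alpha)$, so the integral $\int_0^\alpha f_n(x)\sin x\,dx$ is strictly positive even when $\alpha=\pi$, and the squeeze goes through unchanged. You should simply delete that opening reduction. Everything else — the identity $F_n''+F_n=f_n$, the symmetry $f_n(\alpha-x)=f_n(x)$ transferring integrality of $f_n^{(j)}(0)$ to $f_n^{(j)}(\alpha)$, and the bound $x(\alpha-x)\le\alpha^2/4$ giving decay like $(p\alpha/4)^n/n!$ — checks out.
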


The proof of this theorem requires an auxilliary result.

\begin{lem}
\label{lem 5.2}
If $0 \neq \left|a_n \alpha + b_n \beta \right| \to 0$ as $ n \to \infty $, with $a_n,b_n \in \mathbb{Z}$ for all $n \in \mathbb{N} $, then at least one of $\alpha$ and $\beta$ is irrational.
\end{lem}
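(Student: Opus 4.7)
The plan is to argue by contradiction, essentially copying the easy direction of the Approximation Theorem but with two rationals in play rather than one. Suppose, for the sake of contradiction, that both $\alpha$ and $\beta$ are rational. Then I may write $\alpha = p/q$ and $\beta = r/s$ for integers $p, q, r, s$ with $q, s \neq 0$.

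Next I would place the expression $a_n \alpha + b_n \beta$ over the common denominator $qs$:
$$a_n \alpha + b_n \beta \;=\; \frac{a_n p s + b_n r q}{qs}.$$
The numerator is an integer linear combination of integers, hence itself an integer; and the hypothesis $a_n\alpha + b_n\beta \neq 0$ forces this numerator to be nonzero. Therefore its absolute value is at least $1$, which gives the uniform lower bound
$$|a_n \alpha + b_n \beta| \;\geq\; \frac{1}{|qs|}$$
valid for every $n \in \mathbb{N}$.

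Since $1/|qs|$ is a fixed positive constant independent of $n$, this contradicts the assumption that $|a_n \alpha + b_n \beta| \to 0$. Hence the assumption that both $\alpha$ and $\beta$ are rational is untenable, and at least one of them must be irrational. There is no genuine obstacle in this argument; the only subtlety to respect is that the strict inequality $0 \neq |a_n \alpha + b_n \beta|$ is exactly what is needed to ensure that the integer in the numerator is nonzero, which is what activates the lower bound by $1$.
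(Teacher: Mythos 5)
Your proof is correct and follows exactly the same route as the paper: assume both are rational, clear denominators to obtain an integer numerator, and invoke the $\geq 1/|qs|$ lower bound to contradict convergence to zero. (You are in fact slightly more careful than the paper in writing $|qs|$ rather than $qs$ in the final bound.)
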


\begin{proof}
We shall prove it by contradiction. Suppose, both $\alpha$ and $\beta$ are rational. Let $\alpha = \frac{p}{q}$ and $\beta = \frac{r}{s}$, where $p,r \in \mathbb{Z}$ and $q,s \in \mathbb{Z}-\{0\}$.
\\
So, 
$$
\left| a_n \alpha + b_n \beta \right|=\left|\frac{(a_n p s + b_n q r)}{qs} \right|
$$

Now, $(a_n p s + b_n q r) \in \mathbb{Z}$, and as $\left| a_n \alpha + b_n \beta \right| \neq 0 $,

$$
\left|\frac{(a_n p s + b_n q r)}{qs} \right| \ge \frac{1}{qs}
$$
So, $\left| a_n \alpha + b_n \beta \right| \ge \frac{1}{qs}$ cannot tend to $0$ as $n \to \infty $.
So, our supposition must be false, i.e. at least one of $\alpha$ and $\beta$ must be irrational.

\end{proof}

{\em Proof of the Theorem}.

Let $\alpha = \frac{p}{q}$ for some $p,q \in \mathbb{Z}^{+}$. Let us define a set S as 
$$
S = \left\{ x+iy :x,y \in \mathbb{Z} \right\}.
$$
Define a function $F_n:[0,1] \to \mathbb{C} $ by
$$
F_n= (ip)^{2n} f_n - (ip)^{2n-1} q f'_n+(ip)^{2n-2} q^2 f''_n - ... + q^{2n} f_n^{(2n)},
$$
where $$f_n(x) =\frac{ x^n (1-x)^n}{n!}$$ for all $x \in [0,1].$
Clearly, $F_n(0),F_n(1) \in S$ and $0 \le f_n(x) \le \frac{1}{n!}$ for all $x \in [0,1]$.
Now,
\begin{align*}
F'_n & =(ip)^{2n} f'_n - (ip)^{2n-1} q f''_n+(ip)^{2n-2} q^2 f'''_n - ...-(ip) q^{2n-1} f_n^{(2n)} + q^{2n} f_n^{(2n+1)} \\
& =(ip)^{2n} f'_n - (ip)^{2n-1} q f''_n+(ip)^{2n-2} q^2 f'''_n - ... -(ip)q^{2n-1} f^{(2n)}_n
\end{align*}
as $f_n$ is a polynomial of degree $2n$.
So,
$$
F'_n +\frac{ip}{q} F_n = \frac{(ip)^{2n+1}}{q} f_n
$$
This is a linear differential equation for $F_n$, which gives the solution
$$
F_n(x) e^{\frac{ipx}{q}} = \int \frac{(ip)^{2n+1}}{q} e^{\frac{ipx}{q}} f_n(x) dx + C, \textrm{ for some constant } C \ .
$$
Thus,
$$
\left[ F_n(x) e^{\frac{ipx}{q}} \right]_{0}^{1} = \int_{0}^{1} \frac{(ip)^{2n+1}}{q} e^{\frac{ipx}{q}} f_n(x) dx 
$$
implying that

\begin{equation*}
\
F_n(1) e^{\frac{ip}{q}}-F_n(0)=\frac{(ip)^{2n+1}}{q} \int_{0}^{1} e^{\frac{ipx}{q}} f_n(x) dx
\end{equation*}

It should be clear that the RHS of the above expression is non-zero. To see why, note that 
$ e^{\frac{ipx}{q}}=\cos{\left(\frac{px}{q}\right)}+i \sin{\left(\frac{px}{q}\right)} $, so, the imaginary part of the integral is 

$$
\int_{0}^{1} \sin{\left(\frac{px}{q}\right)} f_n(x) dx,
$$
which must be non-zero as the integrand is positive on $(0,1)$ for $0 < \frac{p}{q} \le \pi$. So, the R.H.S.  has non-zero real part. Now,
\begin{align*}
0 & \neq \left| \operatorname{Re}{\left( \frac{(ip)^{2n+1}}{q} \int_{0}^{1} e^{\frac{ipx}{q}} f_n(x) dx \right)}\right| \\
& =\left| \frac{p^{2n+1}}{q} \int_{0}^{1} \sin{\left(\frac{px}{q}\right)} f_n(x) dx\right|\\
& \le \frac{p^{2n+1}}{q} \int_{0}^{1} \frac{1}{n!} dx\\
& = \frac{p^{2n+1}}{n! \ q} \to 0 \textrm{ as } n \to \infty \ .
\end{align*}

So, for the LHS, we have 

$$
0 \neq \left| \operatorname{Re}{\left(F_n(1) e^{\frac{ip}{q}}-F_n(0)\right)}\right| \to 0 \textrm{ as } n \to \infty \ .
$$

Let $F_n(0)=a_n+b_n i$ and $F_n(1)=c_n+d_n i$ . As $F_n(0),F_n(1) \in S$, we have  $a_n,b_n,c_n,d_n \in \mathbb{Z}$ . Now, 
\begin{align*}
\operatorname{Re}{\left(F_n(1) e^{\frac{ip}{q}}-F_n(0)\right)} & = \operatorname{Re}{\left(\left(c_n+d_n i\right) \left( \cos{\left( \frac{p}{q}\right)} + i \sin{\left( \frac{p}{q}\right)}  \right) - \left( a_n + b_n i \right) \right)} \\
& = c_n \cos{\left( \frac{p}{q}\right)} - d_n \sin{\left( \frac{p}{q}\right)} -a_n.
\end{align*}
Therefore,
$$
 \ 0 \neq \left| c_n \cos{\left( \frac{p}{q}\right)} - d_n \sin{\left( \frac{p}{q}\right)} -a_n\right| \to 0 \ \textrm{as} \ n \to \infty \ .
$$
 So, from Lemma \ref{lem 5.2}, at least one of $\sin{\left( \frac{p}{q}\right)}$ and $ \cos{\left( \frac{p}{q}\right)}$ must be irrational.

\qed

The above theorem can have wide applications. The most important application is probably that we can conclude about irrationality of $\pi$ from the above theorem. It is presented in the following corollary.

\begin{cor}
$\pi$ is irrational.
\end{cor}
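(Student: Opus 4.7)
The plan is a one-line proof by contradiction using Theorem \ref{thm 5.3}. Assume toward a contradiction that $\pi$ is rational. Then $\pi$ itself is a rational number lying in the interval $(0,\pi]$, so Theorem \ref{thm 5.3} applies with $\alpha=\pi$ and forces at least one of $\sin\pi$ and $\cos\pi$ to be irrational.

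The second step is to observe the well-known values $\sin\pi=0$ and $\cos\pi=-1$, both of which are integers and hence rational. This directly contradicts the conclusion of Theorem \ref{thm 5.3}, so the assumption that $\pi$ is rational must be false. (Equivalently, one could apply the theorem with $\alpha=\pi/2\in(0,\pi]$, which would be rational under the same assumption, and use $\sin(\pi/2)=1$, $\cos(\pi/2)=0$; either choice of rational multiple of $\pi$ at which both $\sin$ and $\cos$ take rational values works.)

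There is essentially no obstacle here, since all the heavy lifting has already been done in Theorem \ref{thm 5.3}: the construction of the complex-valued Niven-type function $F_n$, the differential-equation trick yielding the integral expression for $F_n(1)e^{ip/q}-F_n(0)$, and the reduction through Lemma \ref{lem 5.2}. The only thing to verify in the corollary itself is that the chosen $\alpha$ lies in the admissible range $(0,\pi]$ and that both $\sin\alpha$ and $\cos\alpha$ are rational at that point, which is immediate.
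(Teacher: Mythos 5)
Your proof is correct and is essentially the same as the paper's: assume $\pi$ rational, apply Theorem \ref{thm 5.3} with $\alpha=\pi\in(0,\pi]$, and derive a contradiction from the rationality of $\sin\pi=0$ and $\cos\pi=-1$. The parenthetical variant with $\alpha=\pi/2$ is a harmless alternative that works for the same reason.
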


\begin{proof}
Suppose, $\pi$ is rational. As $\pi \in (0,\pi]$, then from Theorem \ref{thm 5.3}, at least one of $\sin{(\pi)}$ and $\cos{(\pi)}$ must be irrational. But $\sin{(\pi)}=0$ and $\cos{(\pi)}=-1$ are both rational. So, our supposition is false and $\pi$ is irrational.
\end{proof}

One more application of Theorem \ref{thm 5.3} is illustrated in the following example.

\begin{exmp}
We want to know about irrationality of $\sin^{-1}{\left(\frac{1}{\sqrt{26}}\right)}$ .
\\
We take $\alpha=\sin^{-1}{\left(\frac{1}{\sqrt{26}}\right)}$. So, $\cos{\left(\alpha\right)}=\sqrt{1-\left(\frac{1}{\sqrt{26}}\right)^2} = \frac{5}{\sqrt{26}}$ . Now, both $\sin{\left(2\alpha\right)}= 2 \sin{\left( \alpha\right)}\cos{(\alpha)} = \frac{5}{13}$ and $\cos{\left(2\alpha\right)}=\cos^2{(\alpha)}-\sin^2{(\alpha)}=\frac{12}{13}$ are rational.
\\
As $0<\alpha < \frac{\pi}{2}$, $0 < 2\alpha < \pi$ , if $2\alpha$ is rational, then both $\sin{\left(2\alpha\right)} $ and $\cos{\left(2\alpha\right)}$ cannot be rational from Theorem \ref{thm 5.3} . So, $2\alpha$ must be irrational and from Lemma \ref{lem 3.5}, $\alpha = \sin^{-1}{\left(\frac{1}{\sqrt{26}}\right)}$ is irrational.

\end{exmp}

\newpage 
\section*{Conclusion}

We would like to sum up what we have done. We established that finding a nice rational approximation for a number is sufficient to prove its irrationality. Then we derived some related results. We used some specific methods to find nice rational approximations for some irrational numbers, such as $\sqrt{m} \notin \mathbb{Z} \ [ m \in \mathbb{N}]$, $e$, $e^2$, $\sin{\left(\frac{1}{n}\right)}$ etc. In some cases, where we did not find a nice rational approximation, we used our derived results to prove irrationality of some numbers, such as algebraic irrationals, indirectly. Finally, we proved a theorem, from which, we can prove irrationality of $\pi$.

At this point, we would like to mention that many other methods apart from the method of finding a nice rational approximation can also be used to prove irrationality of numbers and some specific methods may be found to be more useful in some specific cases. There are a large number of results related to irrational numbers. Before concluding our discussion, we mention some of them, that are related to the results which we have obtained so far.
\begin{enumerate}
    \item If $\alpha$ is a non-zero algebraic number, then $e^{\alpha}$ is irrational.
    \item The trigonometric functions $\sin{\alpha}, \ \cos{\alpha}, \ \tan{\alpha}$ are irrational for non-zero rational $\alpha$ .
    \item Any non-zero value of an inverse trigonometric function is irrational for a rational value of the argument.
\end{enumerate}

\end{document}